\documentclass[12pt]{amsart}
\usepackage{graphicx}
\usepackage{subcaption} 
\usepackage{xfrac}
\usepackage{faktor}
\usepackage{tikz-cd}
\usepackage{cite}
\usepackage{mathrsfs}
\usepackage{hyperref}
\usepackage{amsmath}
\usepackage{amssymb}
\hypersetup{bookmarks=true,
unicode=true,
colorlinks=true,
citecolor=black,
linkcolor=black,
urlcolor=black,
plainpages=false,
pdfpagelabels=true}
\usepackage{url}	
\allowdisplaybreaks 
\usepackage{tikz-cd}
\usepackage{pgf}
\usepackage{xcolor}

\usepackage{comment} 
\usepackage[all]{xy}
\usetikzlibrary{arrows.meta,calc,positioning}
\newtheorem{teo}{Theorem}[section]

\newtheorem{cor}[teo]{Corollary}

\newtheorem{lemma}[teo]{Lemma}
\newtheorem{prop}[teo]{Proposition}

\theoremstyle{definition}
\newtheorem{definition}[teo]{Definition}

\newtheorem{example}[teo]{Example}

\theoremstyle{remark}
\newtheorem{remark}[teo]{Remark}

\numberwithin{figure}{section}

\newcommand{\CC}{\mathcal{C}}
\newcommand{\DD}{\mathcal{D}}

\newcommand{\cat}{\mathop{\mathrm{cat}}}

\newcommand{\co}{\colon}

\newcommand{\Fc}{\mathcal{F}}
\newcommand{\Dc}{\mathcal{D}}
\newcommand{\Si}{\mathcal{S}}

\newcommand{\I}{\mathcal{I}}

\newcommand{\id}{\mathrm{id}}
\newcommand{\N}{\mathrm{N}}

\newcommand{\op}{\mathrm{op}}

\newcommand{\Top}{\mathrm{Top}}
\newcommand{\Uc}{\mathcal{U}}

\newcommand{\Hom}{\mathrm{Hom}}

\newcommand{\Obj}{\mathrm{Obj}}
\newcommand{\Set}{\mathrm{Set}}

\begin{document}
\title[From old categorical models to new ones]{From old categorical models to new ones through open covers}
\thanks{}
\author[Isaac Carcacía Campos]{%
Isaac Carcacía Campos
}
\address{%
Isaac Carcacía Campos
\\
Departamento de Matem\'aticas, Universidade de Santiago de Compostela, 15782-SPAIN}
\email{isaac.c.campos@usc.es}
\begin{abstract}
The nerve theorem provides a combinatorial model for the homotopy type of a topological space from a suitable open cover. We extend this local-to-global approach by replacing the intersections of the cover with compatible categorical models. These local models are assembled through the Grothendieck construction, yielding a small category whose classifying space has the weak homotopy type of the original space. Under suitable hypotheses, the resulting category is finite and acyclic.

We develop two versions of this construction. The first is indexed by the usual poset of nonempty finite intersections of the cover. The second uses the membership poset, inspired by the finite-space construction of Sancho de Salas, which retains only the membership patterns realized by points of the space and is defined for point-finite covers. The classical nerve theorem and its componentwise variant are recovered as particular cases. The construction is recursive: categorical models of local pieces, together with functors representing their inclusions, can be combined to produce categorical models of more complex spaces.
\end{abstract}
\keywords{Homotopy Colimit, Nerve Theorem, Open Cover, Grothendieck construction, Aclycic category.}
\subjclass[2020]{
Primary 55U35, 55U40;
Secondary 55U10, 55P10, 18B35, 55P40.
}
\maketitle
\section*{Introduction}
One of the most important problems in computational topology is how to feed a computer a usable description of a topological space. The usual definition of a topological space is too general for direct computation, since its topology will usually involve an infinite set and an infinite collection  of open subsets. For this reason, one usually seeks a finite discrete or combinatorial model from which relevant topological invariants can be computed.

From the perspective of algebraic topology, a model need not recover the space point by point: it should preserve the homotopical or homological information of interest. Simplicial complexes, simplicial sets, posets and small categories provide different forms of discrete data for this purpose \cite{Quillen1973,McCord1966,Kozlov2008,BARMAK,Raptis2010,Thomason1980}. The difficulty is to construct a manageable model for a given space, rather than merely to know that one exists.

Open covers offer a classical way to do so. The nerve of a cover records which finite families of its members have nonempty intersection. If every nonempty finite intersection is weakly contractible, the realization of the nerve recovers the weak homotopy type of the space \cite{McCord1967,BauerKerberRollRolle2023,Ramras2026}. Suitable covers can, however, be difficult to find, and replacing a noncontractible intersection by a point discards its local homotopical information. Extensions of the nerve theorem address this limitation by weakening hypotheses or retaining more information about intersections \cite{BauerKerberRollRolle2023,ColinDeVerdiereGinotGoaoc2012,Ramras2026}. Nerve-like complexes are also central to topological data analysis, where filtrations track homological features across scales \cite{EdelsbrunnerLetscherZomorodian2002,ZomorodianCarlsson2005}. These considerations suggest retaining the entire intersection diagram, including both its local spaces and their inclusion maps.

We use this diagram to build categorical models. Unlike posets, small categories may have parallel morphisms and relations between their composites, allowing them to encode information with fewer objects in some examples. Finite and acyclic categories have also played an important role in the study of homotopy invariants: intrinsic, categorical versions of several continuous invariants have been developed for them, and these have been used to bound from above the Lusternik–Schnirelmann category, the homotopic distance, and related sectional invariants of their continuous counterparts \cite{Tanaka2018,MaciasVirgosMosquera2020,CarcaciaCamposEtAl2026MathSlovaca,CarcaciaCamposEtAl2026Filomat}.

Given compatible categorical models of the local spaces and their inclusions, we assemble them through the Grothendieck construction. Segal's open covering theorem, homotopy invariance of homotopy colimits and Thomason's theorem show that the resulting category has a classifying space of the same weak homotopy type as the covered space \cite{Segal1968,DuggerIsaksen2004,Thomason1979}. Diagrammatic and homotopy-colimit methods provide important precedents \cite{ZieglerZivaljevic1993,WelkerZieglerZivaljevic1999,FernandezMinian2016}; our focus is on assembling compatible local categories into a single global categorical model.

This procedure is recursive. Once categorical models are known for the members of a cover and their intersections, they can be assembled into a new categorical model of the total space. The local models need not be contractible or discrete, and the transition functors retain information about the maps among the local pieces. This makes it possible to encode nontrivial attaching maps, rather than only the incidence pattern of a good cover.

We consider two indexing posets. The first records all nonempty finite intersections of the cover. For point-finite covers, the second is the \emph{membership poset}, inspired by the finite-space construction of Sancho de Salas \cite{SanchoDeSalas2017}. It retains only the index sets that occur as exact membership patterns of points. Its diagram is a restriction of the full intersection diagram, yet its homotopy colimit still reconstructs the space. This can reduce the number of indexing objects without discarding the local spaces or their inclusion maps.

The classical nerve model and a componentwise variant arise as special cases. We also illustrate some applications of this theory by showing how to build categorical suspensions as a generalization of the non-Hausdorff suspension (see \cite[Section~2.7]{BARMAK}), an explicit seven-object model of the real projective plane, and finite acyclic models of bouquets and spaces formed by gluing Möbius bands along a common boundary.

Section~\ref{sec:categories-classifying-spaces} recalls the categorical constructions, and Section~\ref{sec:homotopy-colimits} collects the homotopy-colimit and open-cover results. Section~\ref{sec:categorical-cover-models} develops models over the full intersection poset; Section~\ref{sec:sancho-diagram} treats membership-pattern diagrams and their relation to the construction of Sancho de Salas. The appendix justifies homotopy invariance without objectwise cofibrancy assumptions.

All indexing categories, and all categories used to model spaces, are small. A category is finite if it has finitely many morphisms. We denote by $\cat$ the category of small categories and functors, by $\Top$ the category of topological spaces and continuous maps, and by $\Set$ the category of sets and functions. A set will occasionally be regarded as a discrete category. For diagrams with the same indexing category, we write $D\simeq_w E$ for a zigzag of objectwise weak homotopy equivalences.

\section{Small categories and their classifying spaces}
\label{sec:categories-classifying-spaces}
We recall the categorical constructions used throughout the article: classifying spaces, acyclic categories and the Grothendieck construction. General references are \cite[Chapters~10 and~11]{RichterCategoriesHomotopy} and \cite{Kozlov2008}.
\subsection{Nerves and classifying spaces}
Let $\Delta$ be the simplex category. Its objects are the finite ordered sets $[n]=\{0<1<\cdots<n\}$, and its morphisms are the order-preserving maps.
For a small category $\CC$, its nerve is the simplicial set $\N\CC\co\Delta^{\op}\to\Set$ whose $n$-simplices are the strings of composable morphisms
$$
c_0\xrightarrow{f_1}c_1\xrightarrow{f_2}\cdots
\xrightarrow{f_n}c_n.
$$
Equivalently,
$$
(\N\CC)_n=\coprod_{c_0,\ldots,c_n\in\Obj(\CC)}\Hom_{\CC}(c_0,c_1)\times\cdots\times \Hom_{\CC}(c_{n-1},c_n).
$$
The inner face maps compose consecutive morphisms, the two outer face maps remove the first or last object, and the degeneracy maps insert identities.

The \emph{classifying space} of $\CC$ is the geometric realization $B\CC=|\N\CC|$. Every functor $F\co\CC\to\DD$ induces a continuous map $BF\co B\CC\to B\DD$. A natural transformation between functors induces a homotopy between the corresponding maps of classifying spaces \cite{LeeHomotopyFunctors,LeeHomotopyFunctorsErratum,RichterCategoriesHomotopy}.

If $\CC$ has an initial or terminal object, then $B\CC$ is contractible. We will also use the canonical homeomorphism $B(\CC^{\op})\cong B\CC.$
A poset $P$ is regarded as a category with one morphism $p\to q$ whenever $p\leq q$. The nondegenerate simplices of $\N P$ are the strict chains $p_0<p_1<\cdots<p_n$, so $BP$ is the realization of the order complex of $P$.

Every finite $T_0$-space determines a finite poset through its specialization order, and every finite poset determines a finite $T_0$-space with its Alexandrov topology. McCord proved that a finite $T_0$-space and the classifying space of its associated poset have the same weak homotopy type \cite{McCord1966}.

\subsection{Acyclic categories}
A small category $\CC$ is \emph{acyclic} if every endomorphism is an identity and, for distinct objects $c,d$, the existence of a morphism $c\to d$ implies that there is no morphism $d\to c$. Every poset is acyclic, and an acyclic category is a poset precisely when there is at most one morphism between each pair of objects. We follow the terminology of \cite[Chapter~10]{Kozlov2008}; see also \cite{Tanaka2019}.

Acyclic categories may have parallel morphisms. For example, the Kronecker category
$$
x\overset{f}{\underset{g}{\rightrightarrows}}y
$$
is acyclic but is not a poset. Its classifying space is homeomorphic to $S^1$. Thus, parallel morphisms can encode homotopical information that would require additional objects in a poset model.

\subsection{The Grothendieck construction}
Let $\Fc\co\I\to\cat$ be a functor. Its \emph{Grothendieck construction}, denoted by $\int_{\I}\Fc$, has objects $(i,x)$, where $i\in\Obj(\I)$ and $x\in\Obj(\Fc(i))$. A morphism $(i,x)\to(j,y)$ is a pair $(u,\alpha)$, where $u\co i\to j$ is a morphism of $\I$ and $\alpha\co\Fc(u)(x)\to y$ is a morphism of $\Fc(j)$. Composition is given by
$$
(v,\beta)\circ(u,\alpha)=\bigl(vu,\beta\circ\Fc(v)(\alpha)\bigr).
$$
There is a canonical projection $\pi_{\Fc}\co\int_{\I}\Fc\to\I$, defined by $\pi_{\Fc}(i,x)=i$ and $\pi_{\Fc}(u,\alpha)=u$. We use the covariant convention of \cite{Thomason1979,FernandezMinian2016}.

If $\Fc$ is constant with value $\CC$, then $\int_{\I}\Fc\cong\I\times\CC$. In particular, if the constant value is the terminal category $\mathbf{1}$, then
$$
\int_{\I}\mathbf{1}\cong\I.
$$
If $P$ is a poset and $\Fc\co P\to\cat$ takes values in posets, then $\int_P\Fc$ is the poset whose elements are the pairs $(p,x)$, with
$$
(p,x)\leq(q,y)
\quad\Longleftrightarrow\quad
p\leq q
\text{ and }
\Fc(p\leq q)(x)\leq y.
$$
The following observation will be used to obtain acyclic models from acyclic local categories.
\begin{prop}
\label{prop:grothendieck-acyclic}
Let $\I$ be an acyclic category and let $\Fc\co\I\to\cat$ take values in acyclic categories. Then $\int_{\I}\Fc$ is acyclic. In particular, the conclusion holds when $\I$ is a poset. Moreover, if $\I$ is a poset and $\Fc$ takes values in posets, then $\int_{\I}\Fc$ is a poset.
\end{prop}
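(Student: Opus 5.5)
The plan is to check the two defining conditions of acyclicity directly in $\int_{\I}\Fc$, and to reduce each one, through the projection $\pi_{\Fc}$, to the same condition first in $\I$ and then in a single fibre category $\Fc(i)$.

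\emph{Endomorphisms are identities.} Let $(u,\alpha)\co(i,x)\to(i,x)$ be an endomorphism. Then $u\co i\to i$ is an endomorphism in $\I$, so $u=\id_i$ because $\I$ is acyclic. Functoriality gives $\Fc(\id_i)=\id_{\Fc(i)}$, so $\alpha$ is an endomorphism $x\to x$ in $\Fc(i)$. Since $\Fc(i)$ is acyclic, $\alpha=\id_x$. Hence $(u,\alpha)=(\id_i,\id_x)$, which is the identity of $(i,x)$ by the composition formula.

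\emph{No morphisms in both directions between distinct objects.} Suppose $(u,\alpha)\co(i,x)\to(j,y)$ and $(v,\beta)\co(j,y)\to(i,x)$ with $(i,x)\neq(j,y)$. I would split into two cases.
\begin{itemize}
\item If $i\neq j$, then $u\co i\to j$ and $v\co j\to i$ contradict the acyclicity of $\I$.
\item If $i=j$, then $u$ and $v$ are endomorphisms of $i$, hence both equal $\id_i$. As before, $\alpha\co x\to y$ and $\beta\co y\to x$ are morphisms of $\Fc(i)$. Since $x\neq y$, this contradicts the acyclicity of $\Fc(i)$.
\end{itemize}
In both cases we reach a contradiction, so $\int_{\I}\Fc$ is acyclic. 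The claim for a poset $\I$ then follows immediately, because every poset is acyclic.

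\emph{The poset case.} Assume $\I$ is a poset and every $\Fc(i)$ is a poset. By the remark in the paper, an acyclic category is a poset exactly when there is at most one morphism between any two objects, and acyclicity has just been proved. Take two morphisms $(u,\alpha),(u',\alpha')\co(i,x)\to(j,y)$. Then $u=u'$ because $\I$ is thin. Consequently $\alpha$ and $\alpha'$ are parallel morphisms $\Fc(u)(x)\to y$ in the poset $\Fc(j)$, so $\alpha=\alpha'$. This thinness is also what underlies the explicit order relation on $\int_P\Fc$ recalled before the statement.

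There is no genuine obstacle here. The only point requiring care is using $\Fc(\id_i)=\id$, which guarantees that in the case $i=j$ the second components are honest morphisms between $x$ and $y$ in the same category $\Fc(i)$.
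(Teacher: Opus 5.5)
Your proof is correct and follows the paper's argument essentially step for step. You project to $\I$ to force $u=\id_i$ (and $i=j$), then use acyclicity of the fibre $\Fc(i)$, and in the poset case observe that both components of a morphism are unique when they exist; your explicit remark that $u=v=\id_i$ when $i=j$, so that $\alpha$ and $\beta$ genuinely live in $\Fc(i)$, just spells out a step the paper leaves implicit.
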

\begin{proof}
Let $(u,\alpha)\co(i,x)\to(i,x)$ be an endomorphism in $\int_{\I}\Fc$. Its image under the canonical projection $\pi_{\Fc}\co\int_{\I}\Fc\to\I$ is the endomorphism $u\co i\to i$. Since $\I$ is acyclic, $u=\id_i$. Therefore, $(u,\alpha)$ is determined by an endomorphism $\alpha\co x\to x$ in $\Fc(i)$, which must also be an identity. Hence every endomorphism of $\int_{\I}\Fc$ is an identity.
Now suppose that there are morphisms
\[
(i,x)\longrightarrow(j,y)
\qquad\text{and}\qquad
(j,y)\longrightarrow(i,x)
\]
between distinct objects. Their images in $\I$ give morphisms $i\to j$ and $j\to i$. Since $\I$ is acyclic, this is possible only if $i=j$. Both morphisms then lie in the fiber $\Fc(i)$, so the acyclicity of $\Fc(i)$ implies $x=y$. This contradicts the assumption that $(i,x)$ and $(j,y)$ are distinct. Thus, $\int_{\I}\Fc$ is acyclic.

Finally, suppose that $\I$ and all the categories $\Fc(i)$ are posets. A morphism $(i,x)\to(j,y)$ is determined by a morphism $u\co i\to j$ in $\I$ and a morphism $\Fc(u)(x)\to y$ in $\Fc(j)$. Both are unique whenever they exist. Hence there is at most one morphism between any two objects of $\int_{\I}\Fc$, and the Grothendieck construction is a poset.
\end{proof}
\section{Homotopy colimits and reconstruction from open covers}
\label{sec:homotopy-colimits}
Homotopy colimits provide homotopically meaningful replacements for ordinary colimits. We recall their homotopy invariance, Thomason's theorem and two results reconstructing a space from a diagram of open subsets. For comparison results concerning homotopy colimits of diagrams over posets, we refer also to \cite{ZieglerZivaljevic1993, WelkerZieglerZivaljevic1999}.
\subsection{Homotopy colimits}
A simplicial space is a functor $Z\co\Delta^{\op}\to\Top$. Its geometric realization is obtained by gluing the spaces $Z_n\times\Delta^n$ according to the face and degeneracy maps:
$$
|Z| = \coprod_{n\geq0}Z_n\times\Delta^n\big/\sim.
$$
A simplicial set may be regarded as a simplicial space by giving each set of simplices the discrete topology. With this convention, the classifying space $B\CC$ introduced in Section~\ref{sec:categories-classifying-spaces} is the realization of the simplicial space $\N\CC$.
Let $\I$ be a small category and let $X\co\I\to\Top$ be a functor. Its simplicial replacement is the simplicial space
$$
\operatorname{srep}(X)_n=\coprod_{i_0\to i_1\to\cdots\to i_n}X(i_0).
$$
The face maps compose consecutive morphisms or apply a structure map of $X$, while the degeneracy maps insert identities. We define the homotopy colimit of $X$ by the Bousfield--Kan formula
$$
\operatorname{hocolim}_{\I}X= \left|\operatorname{srep}(X)\right|.
$$
We refer to \cite{BousfieldKan1972,DuggerHocolimPrimer} for this construction and its basic properties.
The property needed throughout the article is homotopy invariance.
\begin{teo}[Homotopy invariance]
\label{thm:hocolim-invariance}
Let $X,Y\co\I\to\Top$ be diagrams and let $\alpha\co X\Rightarrow Y$ be an objectwise weak homotopy equivalence. Then the induced map
$$
\operatorname{hocolim}_{\I}X \longrightarrow \operatorname{hocolim}_{\I}Y
$$
is a weak homotopy equivalence.
\end{teo}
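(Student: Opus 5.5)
The plan is to compare the two realizations level by level, using the standard fact that a levelwise weak equivalence between ``good'' (Reedy cofibrant, or proper) simplicial spaces induces a weak equivalence of realizations. The paper announces an appendix that avoids objectwise cofibrancy assumptions, so the argument has to work for arbitrary diagrams $X,Y\co\I\to\Top$.

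First, at each simplicial level, $\alpha$ induces
$$
\operatorname{srep}(X)_n=\coprod_{i_0\to\cdots\to i_n}X(i_0)\longrightarrow\coprod_{i_0\to\cdots\to i_n}Y(i_0)=\operatorname{srep}(Y)_n,
$$
which is a coproduct of weak homotopy equivalences. It is therefore a weak homotopy equivalence, since homotopy groups and path components of a disjoint union are computed summandwise.

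Second, I need to know that $\operatorname{srep}(X)$ is a good simplicial space, meaning its degeneracies are closed cofibrations, whatever $X$ is. The key observation is that degeneracies of the simplicial replacement are inclusions of summands: $s_j$ sends the summand indexed by a string to the summand indexed by the string with an identity inserted, via the identity of $X(i_0)$. Each degenerate simplex is hit by a unique nondegenerate string together with a unique iterated degeneracy. Hence the latching map $L_n\operatorname{srep}(X)\to\operatorname{srep}(X)_n$ is the inclusion of a union of summands, which is a clopen inclusion and thus a closed cofibration. This makes the simplicial space split (Reedy cofibrant) with no hypothesis on the spaces $X(i)$.

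Third, with this in place I filter the realization by skeleta. The $n$-skeleton is the pushout of the $(n-1)$-skeleton along
$$
N_n\times\partial\Delta^n\cup L_n\times\Delta^n\hookrightarrow\operatorname{srep}(X)_n\times\Delta^n,
$$
where $N_n$ is the coproduct of $X(i_0)$ over the nondegenerate strings. Because the degenerate part is a separate summand, this reduces to attaching $N_n\times\Delta^n$ along $N_n\times\partial\Delta^n$, which is a closed cofibration, and $\alpha$ gives a map of these pushout squares. By induction on $n$, using the gluing lemma for pushouts along closed cofibrations, each skeleton map is a weak equivalence. Passing to the colimit along closed inclusions, where compact subsets of a sequential colimit of $T_1$-type closed inclusions lie in a finite stage, gives a weak equivalence on $\operatorname{hocolim}$.

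The main obstacle is the gluing lemma for weak (not genuine) homotopy equivalences without any separation or cofibrancy assumptions on the $X(i)$. I would first try to prove it by a Mayer--Vietoris-type excision for homotopy groups on open neighbourhoods: thicken $N_n\times\partial\Delta^n$ by a collar in $\Delta^n$, which is available because $\partial\Delta^n\subset\Delta^n$ is an NDR pair. The thickened attachment is then an open cover whose pieces deform to the old skeleton and to $N_n\times\Delta^n$, so a standard open-cover weak-equivalence criterion (Segal/May's gluing for excisive triads) applies. Similarly, the compactness step in the colimit may need care for non-Hausdorff spaces. Here I would argue that each stage is a closed subspace and that a compact map from a sphere factors through some stage because of the simplicial filtration.
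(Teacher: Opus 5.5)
\textbf{Overall.} Your proposal has the same structure as the paper's proof. It combines a levelwise weak equivalence of simplicial replacements with the fact that $\operatorname{srep}(X)$ has free degeneracies; your description of the latching map as an inclusion of summands is Lemma~\ref{lem:srep-free-degeneracies}. The difference is where each stops. The paper quotes the realization theorem for simplicial spaces with free degeneracies from Dugger--Isaksen. You instead reprove that theorem, using skeletal induction, a collar/excisive-triad gluing lemma and a colimit argument. The citation is short and absorbs the point-set issues. Your version is self-contained, but those issues then become yours, and two points need adjusting.

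\textbf{Reedy cofibrancy.} $\operatorname{srep}(X)$ is Reedy cofibrant with respect to closed Hurewicz cofibrations, but not with respect to Quillen cofibrations. The latching map $L_n\to L_n\sqcup N_n$ is a Quillen cofibration only if $N_n$, hence each $X(i_0)$, is cofibrant. That is exactly the hypothesis the appendix avoids. So the result is carried by your explicit pushout induction, which is a ``proper'' simplicial space argument pairing Hurewicz cofibrations with weak equivalences, and not by a model-categorical Reedy statement. Your collar/excisive-triad argument handles the gluing step correctly.

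\textbf{Colimit step.} The compactness lemma for sequential colimits of closed $T_1$-inclusions does not apply here. We have $\mathrm{sk}_n\setminus\mathrm{sk}_{n-1}\cong N_n\times\operatorname{int}\Delta^n$, and the $X(i)$ need not be $T_1$ (open subsets of finite $T_0$-spaces are typical examples). For the simplicial replacement there is a clean substitute:
\begin{itemize}
\item The transformation $X\Rightarrow *$ induces a map $p\co\operatorname{hocolim}_{\I}X\to\operatorname{hocolim}_{\I}*=B\I$.
\item Each point of $\operatorname{hocolim}_{\I}X$ is uniquely represented by a nondegenerate string of $\N\I$, a point of $X(i_0)$ and an interior point of the simplex. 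The map $p$ keeps the string and the simplex coordinate.
\item Hence $p^{-1}(\mathrm{sk}_nB\I)=\mathrm{sk}_n\operatorname{hocolim}_{\I}X$.
\item A compact set maps into a finite skeleton of the CW complex $B\I$. It therefore lies in some $\mathrm{sk}_n$, which is a closed subspace.
\end{itemize}
Consequently, maps of spheres and their homotopies factor through a finite stage. With these adjustments, your argument is a complete proof of what the paper cites.
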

\begin{proof}
See Proposition~\ref{prop:hocolim-invariance-appendix}.
\end{proof}
It is therefore not enough to choose unrelated weak equivalences $X(i)\simeq_w Y(i)$ for the individual objects. The weak equivalences must form a natural transformation, or more generally a zigzag of objectwise weak equivalences in $\Top^{\I}$.
\subsection{Thomason's homotopy colimit theorem}
The Grothendieck construction provides a categorical model of the homotopy colimit of a category-valued diagram.
\begin{teo}[Thomason]
\label{thm:thomason}
Let $\Fc\co\I\to\cat$ be a functor. There is a natural weak homotopy equivalence
$$
\operatorname{hocolim}_{\I}B\Fc
\longrightarrow
B\left(\int_{\I}\Fc\right).
$$
\end{teo}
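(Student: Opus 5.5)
The plan is to model the homotopy colimit simplicially, realize it as the diagonal of a bisimplicial set, and identify that diagonal with the nerve of $\int_{\I}\Fc$ up to a natural weak equivalence.

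\emph{Step 1: reduce to bisimplicial sets.} Each $B\Fc(i)=|\N\Fc(i)|$ is a realization, so the simplicial replacement $\operatorname{srep}(B\Fc)$ is the realization, in the inner variable, of the bisimplicial set
$$
T_{n,m}=\coprod_{i_0\to\cdots\to i_n}(\N\Fc(i_0))_m .
$$
Realization of a bisimplicial set may be computed in either order, or through its diagonal. This gives a natural homeomorphism $\operatorname{hocolim}_{\I}B\Fc\cong|dT|$, where $dT_n=T_{n,n}$ is the diagonal. The problem is therefore to compare $dT$ with $\N\bigl(\int_{\I}\Fc\bigr)$ by a natural map that is a weak equivalence.

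\emph{Step 2: construct the comparison.} An $n$-simplex of $dT$ is a string $i_0\xrightarrow{u_1}\cdots\xrightarrow{u_n}i_n$ in $\I$ together with a string $x_0\to\cdots\to x_n$ in $\Fc(i_0)$. We send it to the string in $\int_{\I}\Fc$
$$
(i_0,x_0)\to(i_1,\Fc(u_1)(x_1))\to(i_2,\Fc(u_2u_1)(x_2))\to\cdots,
$$
whose morphisms are $(u_k,\Fc(u_k\cdots u_1)(x_{k-1}\to x_k))$. We then check compatibility with faces and degeneracies. For $d_0$, removing $i_0$ pushes the $\Fc(i_0)$-string forward along $u_1$, which matches the formula. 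Naturality in $\Fc$ is clear. This yields a simplicial map $dT\to\N\int_{\I}\Fc$; realizing it and precomposing with the homeomorphism of Step 1 gives the required natural map.

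\emph{Step 3: show it is a weak equivalence.} This is the main obstacle. We filter both sides over $\I$ and argue by a Quillen Theorem A type comparison. Consider the projection $\pi_{\Fc}$ and, for each $j$, the comma category $\pi_{\Fc}/j$. By a direct computation, $\pi_{\Fc}/j\cong\int_{\I/j}\Fc\circ p_j$, and the fiber inclusion $\Fc(j)\to\int_{\I/j}\Fc p_j$ has a left adjoint $((i,u),x)\mapsto \Fc(u)(x)$. Hence $B(\pi_{\Fc}/j)\simeq B\Fc(j)$.

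With this in hand, we express both $\N\int\Fc$ and $dT$ as homotopy colimits over $\I$. Since $\int\Fc\simeq\operatorname{hocolim}_j\N(\pi_{\Fc}/j)$ via the standard Quillen-type lemma, we have
$$
\N\int\Fc\simeq\operatorname{hocolim}_j\N(\pi_{\Fc}/j),
$$
and correspondingly $dT=\operatorname{hocolim}\N\Fc$. The comparison then reduces objectwise to the equivalences $\N\Fc(j)\to\N(\pi_{\Fc}/j)$, and Theorem~\ref{thm:hocolim-invariance} finishes the argument.

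A cleaner alternative for Step 3 is to apply the realization lemma directly. Write $\N\int\Fc$ as the diagonal of the bisimplicial set of pairs consisting of a string in $\int\Fc$ and a string in $\I$ lying under it. Then show that, for each fixed outer degree, the map from $T$ is a simplicial homotopy equivalence, using the contractions given by initial objects of the categories $i_0/\I$. I would attempt this version first and fall back on the comma-category argument if the simplicial bookkeeping becomes unwieldy.
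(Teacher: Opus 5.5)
The paper does not prove this statement; it quotes it from \cite{Thomason1979}. Your proposal is therefore an attempt at a self-contained argument. Steps~1 and~2 are sound. Your string is compatible with all faces and degeneracies (the $d_0$ check uses $\Fc(u_2)\Fc(u_1)=\Fc(u_2u_1)$), and the identification $\pi_{\Fc}/j\cong\int_{\I/j}\Fc p_j$, together with the adjunction, is correct.

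The gap is in Step~3. The fiber inclusions $\iota_j\colon\Fc(j)\to\pi_{\Fc}/j$, $y\mapsto((j,y),\id_j)$, are \emph{not} natural in $j$. For $w\colon j\to j'$ one gets $w_*\iota_j(y)=((j,y),w)$ but $\iota_{j'}\Fc(w)(y)=((j',\Fc(w)(y)),\id_{j'})$, and these are related only by the morphism $(w,\id)$. Hence there is no map of diagrams $\N\Fc\Rightarrow\N(\pi_{\Fc}/-)$, and Theorem~\ref{thm:hocolim-invariance} cannot be invoked.

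Your alternative fails for the same reason. The natural candidate for the ``map from $T$'' sends $x_\bullet$ in $\Fc(i_0)$ to $(i_0,x_\bullet)$ with structure map $\id_{i_0}$. It does not commute with the horizontal face $d_0$: one side gives structure map $u_1$, the other $\id_{i_1}$.

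Moreover, even with a natural comparison you would only obtain a zigzag between $\operatorname{hocolim}_{\I}\N\Fc$ and $\N\int_{\I}\Fc$. Nothing in your sketch shows that your specific map $\eta$ is compatible with that zigzag, and the statement asks for a single natural map.

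\textbf{Repair.} Use the left adjoints $L_j((i,x),u)=\Fc(u)(x)$ instead. They are strictly natural: $\Fc(w)\circ L_j=L_{j'}\circ w_*$. By homotopy invariance, $\operatorname{hocolim}L\colon\operatorname{hocolim}_j\N(\pi_{\Fc}/j)\to\operatorname{hocolim}_j\N\Fc(j)$ is then a weak equivalence.

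Next, let $Q\colon\operatorname{hocolim}_j\N(\pi_{\Fc}/j)\to\N\int_{\I}\Fc$ be the map that forgets the $\I$-string. It is a weak equivalence by the realization lemma with the inner degree $m$ fixed: the summand indexed by $c_0\to\cdots\to c_m$ is $\N(\pi_{\Fc}(c_m)/\I)$, which is contractible by its initial object. This is where initial objects of under-categories enter. The fixed-outer-degree comparison uses the adjunctions instead. Also, $\N\int_{\I}\Fc$ is only weakly equivalent to this diagonal, not equal to it.

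Finally, you must tie $\eta$ to this zigzag. Take a diagonal $n$-simplex given by $j_0\xrightarrow{w_1}\cdots\xrightarrow{w_n}j_n$, a string $c_0\to\cdots\to c_n$ with $c_k=(a_k,x_k)$ and morphisms $(v_k,\alpha_k)$, and $u\colon a_n\to j_0$. The $k$-th vertex of $\eta\circ\operatorname{hocolim}L$ is $(j_k,\Fc(\omega_k)(x_k))$, where $\omega_k=w_k\cdots w_1uv_n\cdots v_{k+1}$. The morphisms $(\omega_k,\id)$ from $c_k$ to this vertex make all ladder squares commute and are compatible with faces and degeneracies. They therefore define a homotopy $Q\simeq\eta\circ\operatorname{hocolim}L$, and two-out-of-three shows that $\eta$ is a weak equivalence.

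For the paper's applications, which only use $\simeq_w$, the natural zigzag alone would already suffice.
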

This is Thomason's homotopy colimit theorem \cite{Thomason1979}. At the simplicial level, Thomason constructs a natural weak equivalence between the homotopy colimit of $\N\Fc$ and the nerve of $\int_{\I}\Fc$. See also \cite{FernandezMinian2016} for diagrams indexed by posets and related combinatorial formulations.

Combining Thomason's theorem with homotopy invariance we obtain the following result, which will be used repeatedly.
\begin{cor}
\label{cor:categorical-diagram-model}
Let $X\co\I\to\Top$ and $\Fc\co\I\to\cat$ be diagrams. If $X$ and $B\Fc$ are connected by a zigzag of objectwise weak homotopy equivalences, then
$$
\operatorname{hocolim}_{\I}X
\simeq_w
B\left(\int_{\I}\Fc\right).
$$
In particular, the conclusion holds if there is an objectwise weak equivalence $X\Rightarrow B\Fc$ or $B\Fc\Rightarrow X$.
\end{cor}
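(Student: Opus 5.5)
The plan is to apply Theorem~\ref{thm:hocolim-invariance} along the zigzag and then finish with Theorem~\ref{thm:thomason}. Write the given zigzag of objectwise weak homotopy equivalences in $\Top^{\I}$ as
$$
X=X_0 \;\leftrightarrow\; X_1 \;\leftrightarrow\; \cdots \;\leftrightarrow\; X_n=B\Fc ,
$$
where each arrow is a natural transformation (pointing either way) whose components are all weak homotopy equivalences.

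First I will use that $\operatorname{hocolim}_{\I}$, defined by the Bousfield--Kan formula as $|\operatorname{srep}(-)|$, is a functor $\Top^{\I}\to\Top$. A natural transformation $\alpha\colon X_k\Rightarrow X_{k+1}$ gives a map of simplicial spaces $\operatorname{srep}(X_k)\to\operatorname{srep}(X_{k+1})$, acting on each summand $X_k(i_0)$ by $\alpha_{i_0}$, and hence a map of realizations. The same holds for transformations pointing the other way. By Theorem~\ref{thm:hocolim-invariance}, each of these induced maps is a weak homotopy equivalence. This produces a zigzag of weak homotopy equivalences of spaces from $\operatorname{hocolim}_{\I}X$ to $\operatorname{hocolim}_{\I}B\Fc$.

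Next, Theorem~\ref{thm:thomason} supplies a weak homotopy equivalence
$$
\operatorname{hocolim}_{\I}B\Fc\longrightarrow B\Bigl(\int_{\I}\Fc\Bigr).
$$
Appending it to the zigzag from the previous step connects $\operatorname{hocolim}_{\I}X$ to $B\bigl(\int_{\I}\Fc\bigr)$ by a zigzag of weak homotopy equivalences. I read $\simeq_w$ between spaces as meaning exactly this; since the relation it defines is symmetric and transitive, the conclusion follows. The particular case is the zigzag of length one, in either direction.

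There is no real obstacle here. The only point that needs checking is that functoriality of $\operatorname{hocolim}$ applies to every arrow of the zigzag regardless of its direction, which the construction above handles. All the homotopical content is contained in Theorem~\ref{thm:hocolim-invariance}, which is proved in the appendix without cofibrancy hypotheses, and in Theorem~\ref{thm:thomason}.
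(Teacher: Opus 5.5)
Your proposal is correct and follows the paper's own argument. You apply homotopy invariance (Theorem~\ref{thm:hocolim-invariance}) to each arrow of the zigzag and then append Thomason's comparison (Theorem~\ref{thm:thomason}), making explicit the functoriality of $\operatorname{hocolim}_{\I}$ along arrows in either direction, which the paper leaves implicit.
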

\begin{proof}
Homotopy invariance identifies the homotopy colimits of $X$ and $B\Fc$, and Theorem~\ref{thm:thomason} identifies the latter with the classifying space of the Grothendieck construction.
\end{proof}
\subsection{Diagrams of open subsets}
Let $\operatorname{Op}(X)$ denote the poset of open subsets of a topological space $X$, ordered by inclusion. A functor $V\co\I\to\operatorname{Op}(X)$ determines a diagram of spaces and a canonical map
$$
\varepsilon_V\co
\operatorname{hocolim}_{\I}V
\longrightarrow X.
$$
For $x\in X$, let $\I_x$ be the full subcategory of $\I$ whose objects are those $i$ for which $x\in V(i)$.
\begin{teo}[Segal's open covering theorem]
\label{thm:segal-open-covering}
Suppose that the open subsets $V(i)$ cover $X$ and that $B\I_x$ is contractible for every $x\in X$. Then the canonical map $\varepsilon_V\co\operatorname{hocolim}_{\I}V\to X$ is a weak homotopy equivalence.
\end{teo}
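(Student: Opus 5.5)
The plan is to compare $\operatorname{hocolim}_{\I}V$ with $X$ through an intermediate space that maps to both. The natural candidate is the homotopy colimit of the \emph{point diagram}. Consider the space
$$
E=\bigl\{(x,t)\in X\times B\I : t\in B\I_x\bigr\}.
$$
More precisely, $E$ is the realization of the simplicial space whose $n$-simplices are the chains $i_0\to\cdots\to i_n$ together with a point of $V(i_0)$; then $E=\operatorname{hocolim}_{\I}V$. Since $V(i_0)\subseteq V(i_k)$ for every $k$, the chain lies in $\I_x$ whenever $x\in V(i_0)$. Hence $\varepsilon_V$ is the projection $E\to X$, and its fibre over $x$ is exactly $|\N\I_x|=B\I_x$, which is contractible by hypothesis. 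The goal is thus to show that a map with contractible fibres, which is ``locally trivial enough'' over the open cover $\{V(i)\}$, is a weak equivalence.

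The key steps, in order, are as follows.

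\textbf{Reduction to a local statement.} I would use the standard criterion for weak equivalences relative to open covers (May, or Dugger--Isaksen \cite{DuggerIsaksen2004}). The criterion says that if $p\co E\to X$ restricts to a weak equivalence $p^{-1}(W)\to W$ for every $W$ in a basis-like cover closed under finite intersections, then $p$ is a weak equivalence. I would take as the cover the finite intersections $W$ of the sets $V(i)$. Over such a $W$, the preimage $p^{-1}(W)$ is the homotopy colimit of the restricted diagram $i\mapsto V(i)\cap W$. This reduces the problem to the case where every point lies in some fixed $V(j_1)\cap\cdots\cap V(j_m)$.

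\textbf{Local case via a simplicial argument.} Rather than argue fibrewise, I would exploit the fact that the realization is of a simplicial space. Write $E_n=\coprod_{\sigma}V(i_0)$ as $\coprod_{\sigma}\{x\in X: \sigma\in \N(\I_x)_n\}$, so that $E_n=\{(x,\sigma):\sigma\in\N(\I_x)_n\}$ with the topology of a disjoint union of opens of $X$. Then $E\to X$ is the realization of a map of simplicial spaces $E_\bullet\to cX$, where $cX$ is the constant simplicial space. Over each point this map is the contractible simplicial set $\N\I_x$. I would then apply Segal's original argument: choose a partition-of-unity-free approach via the Dugger--Isaksen theorem, or alternatively build a section. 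On a neighbourhood $U_x=\bigcap$ of the finitely many relevant $V(i)$ one would like to say $\I_y\supseteq\I_x$ for $y\in U_x$. That containment holds in the wrong direction for a deformation in general, so instead I would use the hypercover formulation: $E_\bullet\to X$ is an open hypercover-type object in which each point has a contractible fibre. Dugger--Isaksen's Theorem (weak equivalence of $|U_\bullet|\to X$ for open hypercovers, generalized to ``complete covers'' with contractible point-fibres) then gives the conclusion.

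\textbf{Main obstacle.} The delicate point is the last step: passing from ``contractible fibres pointwise'' to a global weak equivalence, since $\varepsilon_V$ is not a fibration and the $\I_x$ vary with $x$. If the hypercover theorem does not apply verbatim, my first fallback is a filtration argument. Filter $\I$-chains by length, and prove that for each compact $K\subseteq X$ (which suffices for weak equivalences, because maps from spheres and disks have compact image) the map $p^{-1}(K)\to K$ admits a section up to homotopy, and that relative lifts exist. This would proceed by subdividing $K$ finely so that each small piece lies in an intersection where the relevant $\I_x$ share a common subcategory, and then using contractibility of $B\I_x$ to extend lifts over the skeleta of a triangulation of the disk. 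I expect this lifting-over-skeleta step to be where the real work lies.
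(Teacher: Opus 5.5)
You have a genuine gap: the step that carries all the weight is handed to a theorem that does not apply.

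Your identification of $\operatorname{hocolim}_{\I}V$ with the realization of $E_n=\{(x,\sigma):\sigma\in(\N\I_x)_n\}$ is fine. So is identifying $\varepsilon_V$ with the projection and its fibres with $B\I_x$, and so is McCord's criterion for the cover by finite intersections of the $V(i)$. But that reduction is idle. Over $W=V(j_1)\cap\cdots\cap V(j_m)$ the restricted diagram has exactly the same categories $\I_x$, and all you gain is $j_1\in\I_x$ for every $x\in W$. This gives a section $x\mapsto(x,j_1)$ of $p^{-1}(W)\to W$, hence surjectivity on homotopy groups. Injectivity would require contracting the fibres $B\I_x$ onto the vertex $j_1$ continuously in $x$, which is the original problem.

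You then invoke the Dugger--Isaksen hypercover theorem, and it does not apply. $E_\bullet\to X$ is an open hypercover only if every fibre $\N\I_x\to *$ is a trivial Kan fibration, i.e.\ every map $\partial\Delta^n\to\N\I_x$ extends over $\Delta^n$. Already for $n=1$ this demands a morphism between any two objects of $\I_x$. That fails in the very case the paper needs, $\I_x=\mathcal P_f(I_x)^{\op}$, where distinct singletons are incomparable. The ``generalization to contractible point-fibres'' you then appeal to is the statement being proved. Passing from hypercovers to arbitrary stalkwise weak equivalences needs a further theorem. One option is Dugger--Hollander--Isaksen's result that local weak equivalences of simplicial presheaves are generated by hypercovers, applied to $U\mapsto\N\I_U$ with $\I_U=\{i:U\subseteq V(i)\}$. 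Your sketch neither states nor checks anything of this kind.

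Your fallback is where the real content would have to be, and it misses the key point. Contractibility of $B\I_x$ is pointwise. The subcategory shared by all points of a neighbourhood $U$ of $x$ is $\I_U$, and $B\I_U$ need not be contractible for any $U$. For example, on $X=\mathbb R$ put $r_0=\infty$ and $r_k=1/k$. Take objects $a_k^{\pm}$ ($k\ge0$) with $a_{k'}^{\pm}<a_k^{\pm}$ for $k'>k$ in all sign combinations and $V(a_k^{\pm})=(-r_k,r_k)$. Add objects $e_m$ with $e_{m'}\le e_m$ for $m'\ge m$, $e_m\le a_k^{\pm}$ for $k\le m$, and $V(e_m)=\{0<|x|<r_m\}$. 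For $x\ne0$, $\I_x$ has the initial object $e_m$ with $m=\max\{k:|x|<r_k\}$, and $B\I_0\simeq S^\infty$ is contractible. Yet $B\I_U$ is a finite-dimensional sphere for every neighbourhood $U$ of $0$.

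What is true is that $\I_x$ is the directed union of the full subcategories $\I_U$, $U\ni x$. So $B\I_x=\operatorname{colim}_U B\I_U$, and a given sphere in $B\I_U$ dies in $B\I_{U'}$ only for some smaller $U'$ depending on that sphere. In a skeleton-by-skeleton extension over a triangulated disk, the sphere to be filled over a simplex is assembled from lifts already chosen on its boundary, so it depends on the subdivision. The neighbourhood that kills it depends on the sphere. The whole difficulty is to organize these choices so they do not chase each other. ``Use contractibility of $B\I_x$ to extend lifts over skeleta'' does not do this, since a lift over a simplex has to lie over many points at once.

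The paper itself gives no proof: it cites Segal, the clarification of his argument, and Lurie's \emph{Higher Algebra}, Theorem~A.3.1. Quoting one of these is legitimate, but as written your text is a plan rather than a proof.
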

This is Segal's open covering theorem \cite{Segal1968}; we use the formulation and clarification of its proof given in \cite{DuggerSegalOpenCover}. A formulation in terms of singular complexes and homotopy colimits of simplicial sets is given by Lurie \cite[Theorem~A.3.1]{LurieHigherAlgebra}. A particularly useful sufficient condition is that every $\I_x$ has an initial or terminal object.
\subsection{The intersection diagram of a cover}
Let $\Uc=\{U_i\}_{i\in I}$ be an indexed open cover of $X$, and let $\mathcal P_f(I)$ be the poset of nonempty finite subsets of $I$, ordered by inclusion. For $\sigma\in\mathcal P_f(I)$, write \(U_\sigma=\bigcap_{i\in\sigma}U_i\).
Since $\sigma\subseteq\tau$ implies $U_\tau\subseteq U_\sigma$, the intersections define a functor
$$
\Uc_{\cap}\co\mathcal P_f(I)^{\op}\longrightarrow\Top,
\qquad
\sigma\longmapsto U_\sigma.
$$
Let $P_{\Uc}\subseteq\mathcal P_f(I)$ be the subposet of those $\sigma$ for which $U_\sigma$ is nonempty. The empty intersections contribute no summands to the simplicial replacement. Thus, after the canonical identification of their summands, the simplicial replacements over $P_f(I)^{\op}$ and $P_{\Uc}^{\op}$ are the same, and so are their realizations.
\begin{teo}[Open-cover reconstruction]
\label{thm:open-cover-hocolim}
Let $\Uc=\{U_i\}_{i\in I}$ be an open cover of $X$. Then the canonical augmentation
$$
\varepsilon_{\Uc}\co
\operatorname{hocolim}_{P_{\Uc}^{\op}}U_\sigma
\longrightarrow X
$$
is a weak homotopy equivalence. If $\Uc$ is numerable, then $\varepsilon_{\Uc}$ is a homotopy equivalence.
\end{teo}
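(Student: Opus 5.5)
The plan is to apply Segal's open covering theorem (Theorem~\ref{thm:segal-open-covering}) with indexing category $\I=P_{\Uc}^{\op}$ and the functor $V\co P_{\Uc}^{\op}\to\operatorname{Op}(X)$ given by $\sigma\mapsto U_\sigma$. This is well defined because $\sigma\subseteq\tau$ gives $U_\tau\subseteq U_\sigma$. Each $U_\sigma$ is open, being a finite intersection of open sets. The sets $U_\sigma$ cover $X$, since already the singletons $\{i\}$ with $U_i\neq\emptyset$ do.

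The key step is to compute $\I_x$ for a point $x\in X$. Let $I_x=\{i\in I: x\in U_i\}$. Then $x\in U_\sigma$ exactly when $\sigma\subseteq I_x$. Such a $\sigma$ automatically lies in $P_{\Uc}$, because $x\in U_\sigma$ makes $U_\sigma$ nonempty. Hence $\I_x$ is the opposite of the poset of nonempty finite subsets of the nonempty set $I_x$.

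If $I_x$ is finite, then $I_x$ itself is a greatest element of this poset, so a terminal object of $\I_x^{\op}$. Equivalently it is an initial object of $\I_x$, and $B\I_x$ is contractible. In general $I_x$ may be infinite and there is no extremal object. This is the main obstacle. I would handle it in one of two ways.

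\begin{itemize}
\item Use $B(\CC^{\op})\cong B\CC$ and show directly that the poset $\mathcal P_f(I_x)$ is contractible. It is directed: any two elements $\sigma,\tau$ have the upper bound $\sigma\cup\tau$. Its order complex is therefore the union of the contractible subcomplexes $B\mathcal P_f(\sigma)$, filtered by inclusion, each of which has a top element. Every compact subset of $B\mathcal P_f(I_x)$ lies in one of them, so all homotopy groups vanish. By Whitehead's theorem for CW complexes, $B\I_x$ is contractible.
\item Alternatively, fix $i_0\in I_x$ and use the natural transformations $\sigma\subseteq\sigma\cup\{i_0\}\supseteq\{i_0\}$. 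This exhibits the identity of $\mathcal P_f(I_x)$ as homotopic to a constant map, via the remark that natural transformations induce homotopies.
\end{itemize}

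The second argument is cleaner, and I would present it. Segal's theorem then gives that $\varepsilon_{\Uc}$ is a weak homotopy equivalence.

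For the numerable case, I would refine the argument using a partition of unity $\{\varphi_i\}$ subordinate to $\Uc$ (locally finite after the usual refinement). The aim is to build a section $s\co X\to\operatorname{hocolim}_{P_{\Uc}^{\op}}U_\sigma$ with $\varepsilon_{\Uc}s=\id$. Concretely, send $x$ to the point $(x,(\varphi_i(x))_i)$ in the summand indexed by the chain of subsets of $\operatorname{supp}$ data at $x$, in the standard Segal/Dugger--Isaksen fashion. Then one shows $s\varepsilon_{\Uc}\simeq\id$ by a straight-line homotopy in the simplex coordinates, fiberwise over $X$, which uses the contractibility of the fibers just established.

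Rather than redo this in detail, I would cite the numerable version of Segal's theorem \cite{Segal1968,DuggerSegalOpenCover}, which yields a genuine homotopy equivalence under exactly this hypothesis. The only thing to verify is that its combinatorial hypothesis is the contractibility of $\I_x$ already shown above.
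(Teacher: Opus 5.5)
Your proposal is correct and follows essentially the same route as the paper. Both apply Segal's open covering theorem to $\sigma\mapsto U_\sigma$, identify $\I_x$ with $\mathcal P_f(I_x)^{\op}$, and deduce its contractibility from the fact that $\mathcal P_f(I_x)$ is directed; your contraction $\sigma\subseteq\sigma\cup\{i_0\}\supseteq\{i_0\}$ is a more explicit proof of the paper's ``cofiltered, hence contractible'' step. For the numerable case, you rely on the citation of Segal's numerable version exactly as the paper does, without carrying out the partition-of-unity construction.
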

\begin{proof}
We apply Theorem~\ref{thm:segal-open-covering} to the intersection diagram
$$
\Uc_{\cap}\co P_{\Uc}^{\op}\longrightarrow\operatorname{Op}(X),
\qquad
\sigma\longmapsto U_\sigma.
$$
Fix $x\in X$ and set
$$
I_x=\{i\in I:x\in U_i\}.
$$
The full subcategory of $P_{\Uc}^{\op}$ formed by the objects $\sigma$ such that $x\in U_\sigma$ is precisely $P_f(I_x)^{\op},$ where $P_f(I_x)$ is the poset of nonempty finite subsets of $I_x$.

The poset $P_f(I_x)$ is filtered: for every $\sigma,\tau\in P_f(I_x)$, the finite subset $\sigma\cup\tau$ is an upper bound. Hence $P_f(I_x)^{\op}$ is cofiltered and its classifying space is contractible. Theorem~\ref{thm:segal-open-covering} therefore implies that $\varepsilon_{\Uc}$ is a weak homotopy equivalence. When $\Uc$ is finite, the same conclusion follows immediately because $I_x$ is an initial object of $P_f(I_x)^{\op}$.
If $\Uc$ is numerable, a subordinate partition of unity yields a homotopy inverse for the projection from the homotopy colimit projection; see \cite{Segal1968,BauerKerberRollRolle2023}.
\end{proof}
\begin{remark}
No contractibility assumption is imposed on the intersections $U_\sigma$. Their homotopy types and inclusion maps remain part of the diagram. Contractibility enters only when the intersection spaces are replaced by simpler objects, as in the classical nerve theorem.
\end{remark}
\section{Categorical models subordinate to open covers}
\label{sec:categorical-cover-models}
We now replace the intersections of an open cover by compatible categorical models and assemble them through the Grothendieck construction.
\begin{definition} \label{def:categorical-cover-model}
Let $\Uc=\{U_i\}_{i\in I}$ be an indexed open cover of $X$. A \emph{categorical cover model} subordinate to $\Uc$ consists of a functor $\Fc\co P_{\Uc}^{\op}\to\cat$ together with a zigzag of objectwise weak homotopy equivalences of diagrams
\[
\Uc_{\cap}|_{P_{\Uc}^{\op}} \simeq_w B\Fc
\]
in $\Top^{P_{\Uc}^{\op}}$.
\end{definition}
\begin{definition}
The \emph{Thomason model} associated with a categorical cover model $(\Uc,\Fc)$ is
$$
\operatorname{Th}(\Uc,\Fc)
=
\int_{P_{\Uc}^{\op}}\Fc.
$$
\end{definition}
\begin{teo}[Categorical reconstruction]
\label{thm:categorical-reconstruction}
Let $(\Uc,\Fc)$ be a categorical cover model of a topological space $X$. Then
$$
B\operatorname{Th}(\Uc,\Fc)\simeq_w X.
$$
More precisely, there is a zigzag of weak homotopy equivalences
\[
X \xleftarrow{\simeq_w} \operatorname{hocolim}_{P_{\Uc}^{\op}}U_\sigma \simeq_w \operatorname{hocolim}_{P_{\Uc}^{\op}}B\Fc(\sigma) \xrightarrow{\simeq_w} B\left(\int_{P_{\Uc}^{\op}}\Fc\right).
\]
If every $\Fc(\sigma)$ is acyclic, then $\operatorname{Th}(\Uc,\Fc)$ is acyclic. If, moreover, $P_{\Uc}$ and all the categories $\Fc(\sigma)$ are finite, then $\operatorname{Th}(\Uc,\Fc)$ is a finite acyclic category.
\end{teo}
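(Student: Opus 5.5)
The proof will assemble the displayed zigzag link by link, invoking the three earlier results in turn, and then settle the acyclicity and finiteness claims with Proposition~\ref{prop:grothendieck-acyclic}.

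\emph{Left arrow.} This is exactly Theorem~\ref{thm:open-cover-hocolim}. The canonical augmentation $\varepsilon_{\Uc}\co\operatorname{hocolim}_{P_{\Uc}^{\op}}U_\sigma\to X$ is a weak homotopy equivalence for any open cover, with no hypothesis on the intersections.

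\emph{Middle link.} By Definition~\ref{def:categorical-cover-model} we are given a zigzag of objectwise weak homotopy equivalences in $\Top^{P_{\Uc}^{\op}}$ connecting $\Uc_{\cap}|_{P_{\Uc}^{\op}}$ and $B\Fc$. Applying $\operatorname{hocolim}_{P_{\Uc}^{\op}}$ to each natural transformation in the zigzag gives a zigzag of maps between homotopy colimits. Each of these maps is a weak homotopy equivalence by Theorem~\ref{thm:hocolim-invariance}. The one point to check is functoriality of the Bousfield--Kan construction in natural transformations, which holds because $\operatorname{srep}$ is functorial.

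\emph{Right arrow.} This is Theorem~\ref{thm:thomason} applied to $\Fc\co P_{\Uc}^{\op}\to\cat$. Equivalently, the last two links together are Corollary~\ref{cor:categorical-diagram-model}.

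Composing the three links gives $B\operatorname{Th}(\Uc,\Fc)\simeq_w X$, since being connected by a zigzag of weak equivalences is an equivalence relation.

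\emph{Acyclicity.} $P_{\Uc}^{\op}$ is the opposite of a poset, hence itself a poset and in particular acyclic. If every $\Fc(\sigma)$ is acyclic, Proposition~\ref{prop:grothendieck-acyclic} shows that $\int_{P_{\Uc}^{\op}}\Fc$ is acyclic.

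\emph{Finiteness.} Suppose in addition that $P_{\Uc}$ is finite and each $\Fc(\sigma)$ is finite. A morphism $(\sigma,x)\to(\tau,y)$ in the Grothendieck construction is a pair $(u,\alpha)$, where $u$ ranges over the finitely many morphisms of $P_{\Uc}^{\op}$ and $\alpha$ ranges over the finitely many morphisms of $\Fc(\tau)$. Summing over the finitely many source and target objects shows there are finitely many morphisms in total.

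I do not expect a real obstacle here. The only delicate point is the middle link: the invariance theorem must be applied to each arrow of the zigzag separately, with no cofibrancy assumption. The paper has arranged this through the appendix proposition cited in the proof of Theorem~\ref{thm:hocolim-invariance}, so that step can simply be invoked.
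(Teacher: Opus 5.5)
Your proposal is correct and follows essentially the same route as the paper: open-cover reconstruction for the augmentation, homotopy invariance (applied to each arrow of the given zigzag) for the middle link, Thomason's theorem for the last arrow, and Proposition~\ref{prop:grothendieck-acyclic} for acyclicity. Your explicit count of morphisms in the Grothendieck construction simply spells out the finiteness claim that the paper calls immediate.
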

\begin{proof}
The first comparison is the open-cover reconstruction of Theorem~\ref{thm:open-cover-hocolim}. The second follows from homotopy invariance of homotopy colimits, and the third is Thomason's comparison from Theorem~\ref{thm:thomason}.

When every $\Fc(\sigma)$ is acyclic, the Grothendieck construction is acyclic by Proposition~\ref{prop:grothendieck-acyclic}, since $P_{\Uc}^{\op}$ is a poset. The finiteness assertion is immediate.
\end{proof}
\subsection{Recovery of classical constructions}
\label{subsec:classical-constructions}
Theorem~\ref{thm:categorical-reconstruction} contains the classical nerve construction and its componentwise refinement as particular cases. These examples also explain why it is natural to compare the intersection diagram with a categorical diagram by maps of the form $\Uc_{\cap}\Rightarrow B\Fc$.
\begin{cor}[Good covers]
\label{cor:good-cover}
Let $\Uc=\{U_i\}_{i\in I}$ be an open cover of $X$. If every nonempty finite intersection $U_\sigma$ is weakly contractible, then
\[
BP_{\Uc}\simeq_w X.
\]
If, moreover, $\Uc$ is numerable and every nonempty finite intersection is contractible, then
\[
BP_{\Uc}\simeq X.
\]
\end{cor}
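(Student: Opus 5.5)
The plan is to apply Theorem~\ref{thm:categorical-reconstruction} with the constant diagram $\Fc\co P_{\Uc}^{\op}\to\cat$ whose value is the terminal category $\mathbf{1}$. The Grothendieck construction of a constant diagram with value $\mathbf{1}$ is the indexing category itself, so $\operatorname{Th}(\Uc,\Fc)\cong P_{\Uc}^{\op}$. Using the canonical homeomorphism $B(P_{\Uc}^{\op})\cong BP_{\Uc}$, the first assertion follows once we know that $(\Uc,\Fc)$ is a categorical cover model.

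To verify this, note that $B\mathbf{1}$ is a point, so $B\Fc$ is the constant diagram at a point. The unique maps $U_\sigma\to\ast$ are natural in $\sigma$, since any two maps into a point agree. They therefore form a natural transformation $\Uc_{\cap}\Rightarrow B\Fc$. Each component is a weak homotopy equivalence precisely because $U_\sigma$ is nonempty and weakly contractible. This gives the required (one-step) zigzag, and Theorem~\ref{thm:categorical-reconstruction} yields $BP_{\Uc}\simeq_w X$.

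For the numerable statement we must upgrade each comparison to a genuine homotopy equivalence. First, Theorem~\ref{thm:open-cover-hocolim} already gives that $\varepsilon_{\Uc}$ is a homotopy equivalence. Second, the projection $\operatorname{hocolim}_{P_{\Uc}^{\op}}U_\sigma\to\operatorname{hocolim}_{P_{\Uc}^{\op}}\ast$ should be a homotopy equivalence when each $U_\sigma\to\ast$ is one. Since $\operatorname{hocolim}_{P_{\Uc}^{\op}}\ast=|\N(P_{\Uc}^{\op})|=BP_{\Uc}$ literally, no Thomason comparison is needed here.

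The second point is the step I expect to be the main obstacle, since Theorem~\ref{thm:hocolim-invariance} only covers weak equivalences. I would argue it via the standard fact that the Bousfield--Kan homotopy colimit, being the realization of a simplicial space built from $\coprod X(i_0)$, preserves objectwise homotopy equivalences. One route is to choose contractions $U_\sigma\simeq\ast$ and use the gluing/realization lemma for proper (good) simplicial spaces. The other is to cite the classical numerable nerve theorem \cite{Segal1968,BauerKerberRollRolle2023}, which establishes exactly this. I would try the citation first and keep the realization-lemma argument as justification if it is needed.
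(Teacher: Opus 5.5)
Your first assertion is the paper's argument: the constant diagram at $\mathbf{1}$, the unique maps $U_\sigma\to\ast$ as an objectwise weak equivalence, $\int_{P_{\Uc}^{\op}}\mathbf{1}\cong P_{\Uc}^{\op}$, and $B(P_{\Uc}^{\op})\cong BP_{\Uc}$.

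For the numerable assertion, your primary route differs from the paper's. The paper never touches the homotopy colimit there. It observes that $P_{\Uc}$ is the face poset of the nerve $N(\Uc)$, so $BP_{\Uc}$ is the barycentric subdivision of $|N(\Uc)|$, and then quotes the numerable nerve theorem.

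You instead upgrade both legs of the zigzag $X\leftarrow\operatorname{hocolim}_{P_{\Uc}^{\op}}U_\sigma\rightarrow\operatorname{hocolim}_{P_{\Uc}^{\op}}\ast=B(P_{\Uc}^{\op})$ to homotopy equivalences, and this works.
\begin{itemize}
\item The first leg is the numerable clause of Theorem~\ref{thm:open-cover-hocolim}.
\item For the second leg, the levelwise map $\coprod U_{\sigma_0}\to\coprod\ast$ is a coproduct of homotopy equivalences.
\item $\operatorname{srep}$ is a good simplicial space: its degeneracies are inclusions of coproduct summands (Lemma~\ref{lem:srep-free-degeneracies}), hence closed cofibrations.
\item Therefore the realization lemma for good simplicial spaces applies. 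No choice of contractions is needed, only that each $U_\sigma\to\ast$ is a homotopy equivalence.
\end{itemize}

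This is essentially the standard homotopy-colimit proof of the numerable nerve theorem. So your route re-derives the cited black box inside the paper's own framework and shows exactly where numerability and genuine contractibility enter. The paper's route is shorter but depends on the external theorem together with the face-poset identification.

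If you fall back on citing the classical theorem, note that it is normally stated as $X\simeq|N(\Uc)|$, not as a statement about the projection from the homotopy colimit. It is therefore not ``exactly this'', and you would also need the identification $BP_{\Uc}\cong\operatorname{sd}|N(\Uc)|\cong|N(\Uc)|$ that the paper uses.
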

\begin{proof}
For the first assertion, consider the constant functor $\mathbf{1}\co P_{\Uc}^{\op}\to\cat$ with value the terminal category. The unique maps $U_\sigma\to B\mathbf{1}=*$ form an objectwise weak homotopy equivalence
\[
\Uc_{\cap}|_{P_{\Uc}^{\op}}
\Longrightarrow
B\mathbf{1}.
\]
Theorem~\ref{thm:categorical-reconstruction} therefore gives
\[
X\simeq_w B\left(\int_{P_{\Uc}^{\op}}\mathbf{1}\right).
\]
Since
\[
\int_{P_{\Uc}^{\op}}\mathbf{1} \cong P_{\Uc}^{\op}
\]
and $B(P_{\Uc}^{\op})\cong BP_{\Uc}$, the first assertion follows.
For the second assertion, $P_{\Uc}$ is the face poset of the nerve $N(\Uc)$, so $BP_{\Uc}$ is homeomorphic to the barycentric subdivision of $|N(\Uc)|$. Hence, the numerable nerve theorem gives $|N(\Uc)|\simeq X$; see \cite[Proposition~4.1]{Segal1968} or \cite[Corollary~4G.3]{Hatcher2002}.
\end{proof}
The next consequence keeps the connected components of the intersections. This allows several simplices to have the same set of vertices.
\begin{cor}[Componentwise good covers]
\label{cor:componentwise-good-cover}
Let $\Uc=\{U_i\}_{i\in I}$ be an open cover of $X$. Suppose that every nonempty finite intersection $U_\sigma$ is a disjoint union of open weakly contractible path components. Define $\Pi_{\Uc}\co P_{\Uc}^{\op}\to\cat$ by
\[
\Pi_{\Uc}(\sigma)=\pi_0(U_\sigma),
\]
where $\pi_0(U_\sigma)$ denotes the set of path components of $U_\sigma$, regarded as a discrete category. Then
\[
B\left(\int_{P_{\Uc}^{\op}}\Pi_{\Uc}\right) \simeq_w X.
\]
Moreover, $\int_{P_{\Uc}^{\op}}\Pi_{\Uc}$ is a poset.
\end{cor}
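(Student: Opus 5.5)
The plan is to apply Theorem~\ref{thm:categorical-reconstruction} to the functor $\Pi_{\Uc}$, which requires an objectwise weak homotopy equivalence $\Uc_{\cap}|_{P_{\Uc}^{\op}}\Rightarrow B\Pi_{\Uc}$. Since $\pi_0(U_\sigma)$ is a discrete category, $B\Pi_{\Uc}(\sigma)$ is the discrete space $\pi_0(U_\sigma)$.

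\textbf{Functoriality.} For $\sigma\subseteq\tau$, the inclusion $U_\tau\hookrightarrow U_\sigma$ sends each path component of $U_\tau$ into a unique path component of $U_\sigma$. This gives functions $\pi_0(U_\tau)\to\pi_0(U_\sigma)$, and these compose correctly, so $\Pi_{\Uc}$ is a functor into $\cat$.

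\textbf{The comparison map.} Define $q_\sigma\co U_\sigma\to\pi_0(U_\sigma)$ by sending a point to its path component. Continuity into the discrete space holds precisely because the path components are assumed open, so each fiber $q_\sigma^{-1}(c)=c$ is open. This is the one place the openness hypothesis is used, and it is the step needing care. Naturality in $\sigma$ is immediate: a point of $U_\tau$ lies in its component $c$, and $c$ is contained in the component of $U_\sigma$ that $c$ maps to.

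\textbf{Weak equivalence.} Since $U_\sigma$ is the topological disjoint union of its open components, $q_\sigma$ is the coproduct of the maps $c\to *$ over $c\in\pi_0(U_\sigma)$. Each of these is a weak equivalence because every component is weakly contractible. A coproduct of weak equivalences is a weak equivalence: it is a bijection on $\pi_0$, and homotopy groups at any basepoint are computed inside the corresponding summand. Hence $q$ is an objectwise weak equivalence, and Theorem~\ref{thm:categorical-reconstruction} gives $B\bigl(\int_{P_{\Uc}^{\op}}\Pi_{\Uc}\bigr)\simeq_w X$.

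\textbf{Poset structure.} $P_{\Uc}^{\op}$ is a poset, and every discrete category is a poset. Proposition~\ref{prop:grothendieck-acyclic} therefore shows that $\int_{P_{\Uc}^{\op}}\Pi_{\Uc}$ is a poset.
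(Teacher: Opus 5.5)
Your proposal is correct and takes essentially the same route as the paper: the path-component maps $q_\sigma\co U_\sigma\to\pi_0(U_\sigma)$ are continuous because the components are open. They form a natural objectwise weak equivalence $\Uc_{\cap}|_{P_{\Uc}^{\op}}\Rightarrow B\Pi_{\Uc}$, so Theorem~\ref{thm:categorical-reconstruction} applies, and the poset claim follows from Proposition~\ref{prop:grothendieck-acyclic}. Your extra remarks — that $B$ of a discrete category is the discrete space of its objects, and the coproduct argument for the weak equivalence — only make explicit steps the paper leaves implicit.
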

\begin{proof}
For each $\sigma\in P_{\Uc}$, let
\[
q_\sigma\co U_\sigma\longrightarrow\pi_0(U_\sigma)
\]
send every point to its path component. Since the path components of $U_\sigma$ are open, $q_\sigma$ is continuous when $\pi_0(U_\sigma)$ is endowed with the discrete topology.
The maps $q_\sigma$ form a natural transformation
\[
\Uc_{\cap}|_{P_{\Uc}^{\op}} \Longrightarrow B\Pi_{\Uc}.
\]
Indeed, an inclusion $U_\tau\hookrightarrow U_\sigma$ sends each path component of $U_\tau$ into a unique path component of $U_\sigma$.
Each $q_\sigma$ is a weak homotopy equivalence. It induces a bijection on path components, and its restriction to every path component has contractible target and weakly contractible source. Therefore, Theorem~\ref{thm:categorical-reconstruction} gives
\[
B\left(\int_{P_{\Uc}^{\op}}\Pi_{\Uc}\right) \simeq_w X.
\]
Finally, $P_{\Uc}^{\op}$ is a poset and every $\Pi_{\Uc}(\sigma)$ is a discrete poset. Hence their Grothendieck construction is a poset.
\end{proof}
The objects of $\int_{P_{\Uc}^{\op}}\Pi_{\Uc}$ are the pairs $(\sigma,C)$, where $C$ is a path connected component of $U_\sigma$. There is a morphism $(\tau,D)\to(\sigma,C)$ precisely when $\sigma\subseteq\tau$ and $D\subseteq C$. Thus, the Grothendieck construction records both the components of finite intersections and their incidence under the omission of indices.
\begin{remark}
In both corollaries, the canonical comparison points from the intersection diagram to the categorical diagram. For good covers, each intersection is collapsed to a point. For componentwise good covers, each point is sent to its path connected component. These maps are natural without requiring choices of basepoints.
\end{remark}
\subsection{Categorical suspensions}
\label{subsec:categorical-suspension}
We use the reconstruction theorem to build categorical models of suspensions. Let $\mathbb V$ be the poset with relations $0<-$ and $0<+$, with no relation between $-$ and $+$.
For a small category $\CC$, define $\Si_{\CC}\co\mathbb V\to\cat$ by
\[
\Si_{\CC}(0)=\CC,
\qquad
\Si_{\CC}(-)=\Si_{\CC}(+)=\mathbf{1},
\]
where both structure functors $\CC\to\mathbf{1}$ are unique.
The \emph{categorical suspension} of $\CC$ is the Grothendieck construction
\[
\Sigma_{\cat}\CC=\int_{\mathbb V}\Si_{\CC}.
\]
Explicitly, it is obtained from $\CC$ by adjoining two objects $v_-$ and $v_+$ and unique morphisms
\[
c\longrightarrow v_-,
\qquad
c\longrightarrow v_+
\]
for every $c\in\Obj(\CC)$. There are no morphisms between $v_-$ and $v_+$. In other words, every object of $\CC$ has a unique morphism to each of them.
If $\CC$ is acyclic, then $\Sigma_{\cat}\CC$ is acyclic by Proposition~\ref{prop:grothendieck-acyclic}. If $\CC$ is finite, then $\Sigma_{\cat}\CC$ is finite.
\begin{teo}[Categorical suspension theorem]
\label{thm:categorical-suspension}
For every small category $\CC$, there is a weak homotopy equivalence
\[
B\bigl(\Sigma_{\cat}\CC\bigr)\simeq_w\Sigma B\CC,
\]
where $\Sigma B\CC$ denotes the unreduced suspension.
\end{teo}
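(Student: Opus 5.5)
We prove the statement by realizing $\Sigma B\CC$ as a space with an explicit open cover whose intersection diagram is objectwise equivalent to $B\Si_{\CC}$, and then invoking Theorem~\ref{thm:categorical-reconstruction}.

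\textbf{The cover.} Write $\Sigma B\CC = B\CC\times[0,1]/\sim$, where $B\CC\times\{0\}$ is collapsed to the cone point $p_-$ and $B\CC\times\{1\}$ to $p_+$. Take the two-element cover $\Uc=\{U_-,U_+\}$ with
\[
U_-=\text{image of } B\CC\times[0,\tfrac23),\qquad U_+=\text{image of } B\CC\times(\tfrac13,1].
\]
If $\CC$ is empty, then both $\Sigma B\CC$ and $\Sigma_{\cat}\CC$ are two points with $B$ a two-point discrete space, and the statement is trivial. Otherwise $U_-\cap U_+\cong B\CC\times(\tfrac13,\tfrac23)$ is nonempty. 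Then $P_{\Uc}=\{\{-\},\{+\},\{-,+\}\}$, and $P_{\Uc}^{\op}$ is isomorphic to $\mathbb V$ via $\{-,+\}\mapsto 0$ and $\{\pm\}\mapsto\pm$. The cover is numerable, but only the weak statement is needed.

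\textbf{The comparison.} Under this identification, $\Si_{\CC}$ becomes a functor $P_{\Uc}^{\op}\to\cat$. We exhibit a natural transformation $\Uc_{\cap}\Rightarrow B\Si_{\CC}$ consisting of objectwise weak equivalences:
\[
\mathrm{pr}\co U_-\cap U_+\cong B\CC\times(\tfrac13,\tfrac23)\longrightarrow B\CC,\qquad U_\pm\longrightarrow B\mathbf 1=*.
\]
\begin{itemize}
\item The projection is a homotopy equivalence.
\item The cones $U_\pm$ are contractible, by the straight-line deformation to the cone point. Continuity of this deformation on the quotient needs a little care; if the product/quotient topology is an issue, work in the compactly generated category, or note that the cone is contractible in any case.
\item Naturality reduces to the commutativity of the two squares for $U_-\cap U_+\hookrightarrow U_\pm$. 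This holds because $B$ of the unique functor $\CC\to\mathbf 1$ is the constant map to a point, so both composites are constant.
\end{itemize}
This gives a categorical cover model $(\Uc,\Si_{\CC})$ in the sense of Definition~\ref{def:categorical-cover-model}.

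\textbf{Conclusion.} Theorem~\ref{thm:categorical-reconstruction} yields
\[
\Sigma B\CC\simeq_w B\Bigl(\int_{P_{\Uc}^{\op}}\Si_{\CC}\Bigr)\cong B\bigl(\Sigma_{\cat}\CC\bigr),
\]
where the last isomorphism is induced by $P_{\Uc}^{\op}\cong\mathbb V$.

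\textbf{Main obstacle.} The substantive point is choosing the comparison so that it is genuinely natural rather than just an objectwise equivalence. The directions work out here because the structure functors of $\Si_{\CC}$ go from $\CC$ to $\mathbf 1$, matching the inclusions of the intersection into the cones. Everything else consists of routine point-set checks on the cone.
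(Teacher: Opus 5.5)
Your proposal is correct and follows the paper's proof. Both cover $\Sigma B\CC$ by two open cones, map the intersection to $B\CC$ by projection and the cones to a point, check naturality using the structure functors $\CC\to\mathbf 1$, and apply Theorem~\ref{thm:categorical-reconstruction}.

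Two small remarks. You treat empty $\CC$ separately, where the intersection is empty and $P_{\Uc}$ is not $\mathbb V$; this is a point the paper's proof passes over. Your hedge about contractibility of the cone is unnecessary, since the product of a quotient map with $[0,1]$ is again a quotient map, so the straight-line contraction is continuous.
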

\begin{proof}
Put $Y=B\CC$ and cover $\Sigma Y$ by the two open cone neighborhoods
\[
U_-=\{[y,t]:t<2/3\},
\qquad
U_+=\{[y,t]:t>-2/3\}.
\]
Both sets are contractible, while $U_-\cap U_+\cong Y\times(-2/3,2/3)$.
The intersection diagram of this cover is indexed by $\mathbb V$. Projection onto $Y$ on the intersection, together with the unique maps $U_-\to*$ and $U_+\to*$, gives an objectwise homotopy equivalence from the intersection diagram to $B\Si_{\CC}$. Thus, Theorem~\ref{thm:categorical-reconstruction} gives
\[
B\bigl(\Sigma_{\cat}\CC\bigr) =B\left(\int_{\mathbb V}\Si_{\CC}\right) \simeq_w \Sigma B\CC.
\]
\end{proof}
\begin{remark}
Dually, one may adjoin two initial cone objects. The resulting category is the opposite of the terminal categorical suspension of $\CC^{\op}$ and has the same classifying-space.
\end{remark}
\begin{cor}
\label{cor:suspension-categorical-model}
Let $X$ be a topological space and let $\CC$ be a finite acyclic category such that $X\simeq_w B\CC$. Then $\Sigma_{\cat}\CC$ is a finite acyclic categorical model of $\Sigma X$.
\end{cor}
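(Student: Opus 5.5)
The corollary follows by combining Theorem~\ref{thm:categorical-suspension} with the fact that unreduced suspension preserves weak homotopy equivalences. The structural part is immediate: $\Sigma_{\cat}\CC=\int_{\mathbb V}\Si_{\CC}$ is a Grothendieck construction over the poset $\mathbb V$, and its fibres are $\CC$ and two copies of $\mathbf{1}$, all finite and acyclic. So Proposition~\ref{prop:grothendieck-acyclic} shows that $\Sigma_{\cat}\CC$ is acyclic. It is finite because its morphisms are those of $\CC$, plus the identities of $v_\pm$, plus $2|\Obj(\CC)|$ cone morphisms. It remains to show $B(\Sigma_{\cat}\CC)\simeq_w \Sigma X$. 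Theorem~\ref{thm:categorical-suspension} gives $B(\Sigma_{\cat}\CC)\simeq_w \Sigma B\CC$, so the task reduces to proving $\Sigma B\CC\simeq_w\Sigma X$ from a zigzag $X\simeq_w B\CC$.

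I would prove that $\Sigma$ carries a weak equivalence $f\co Y\to Z$ to a weak equivalence, and then apply this to each arrow of the zigzag. The cleanest route uses the machinery already in the paper. Write $\Sigma Y$ as a homotopy colimit over $\mathbb V$ using Theorem~\ref{thm:open-cover-hocolim}, applied to the cover $U_-,U_+$ from the proof of Theorem~\ref{thm:categorical-suspension}. This identifies $\Sigma Y$, up to weak equivalence, with $\operatorname{hocolim}_{\mathbb V}$ of the diagram $*\leftarrow Y\times(-2/3,2/3)\rightarrow *$. That diagram is objectwise homotopy equivalent to $D_Y=(*\leftarrow Y\rightarrow *)$, naturally in $Y$.

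A map $f$ then induces a natural transformation $D_Y\Rightarrow D_Z$ whose components are $f$ and two identities of the point. If $f$ is a weak equivalence, this transformation is an objectwise weak equivalence, and Theorem~\ref{thm:hocolim-invariance} makes $\operatorname{hocolim}D_Y\to\operatorname{hocolim}D_Z$ a weak equivalence. It remains to check that these identifications assemble into a zigzag $\Sigma Y\simeq_w\Sigma Z$; this is a small naturality diagram.

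The main obstacle is exactly this naturality bookkeeping. The open cover of $\Sigma Y$ has to be compatible with $\Sigma f$, and this holds because $\Sigma f$ preserves the $t$-coordinate, so $(\Sigma f)^{-1}(U_\pm^Z)=U_\pm^Y$. Given that, the augmentation maps of Theorem~\ref{thm:open-cover-hocolim} commute with the maps induced by $f$. If this proves awkward, the fallback is to skip the middle comparison and apply Theorem~\ref{thm:hocolim-invariance} directly to the diagrams $*\leftarrow Y\times(-2/3,2/3)\rightarrow *$, using naturality of $\varepsilon_{\Uc}$. A second fallback avoids homotopy colimits altogether: run a Mayer--Vietoris/van Kampen argument on the cover $U_\pm$ to get an isomorphism on homology and $\pi_1$. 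That argument needs simple-connectivity hypotheses, though, so the homotopy-colimit route is preferable.
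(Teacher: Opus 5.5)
Your proof is correct, but it is organized differently from the paper's.

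\textbf{Your route.} You first pass to $\Sigma B\CC$ via Theorem~\ref{thm:categorical-suspension}. You then prove, as a separate lemma, that unreduced suspension carries each arrow of the zigzag $X\simeq_w B\CC$ to a weak equivalence, using the two-cone cover, Theorem~\ref{thm:open-cover-hocolim} and homotopy invariance.

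\textbf{The paper's route.} The paper never compares $\Sigma X$ with $\Sigma B\CC$. It takes the two-cone cover of $\Sigma X$ itself and maps its intersection diagram $U_-\leftarrow X\times(-2/3,2/3)\rightarrow U_+$ to $(\ast\leftarrow X\rightarrow\ast)$. It then pushes the zigzag $X=Y_0\simeq_w\cdots\simeq_w Y_n=B\CC$ through the diagrams $(\ast\leftarrow Y_k\rightarrow\ast)$ in $\Top^{\mathbb V}$. The last of these is literally $B\Si_{\CC}$, so Theorem~\ref{thm:categorical-reconstruction} finishes in one step. Your auxiliary diagrams $D_Y$ are thus essentially the whole proof, provided you let the zigzag run inside $\Top^{\mathbb V}$ instead of taking homotopy colimits at each stage and re-identifying them with suspensions.

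\textbf{What each buys.} Your route gives the standard by-product that $\Sigma f$ itself is a weak equivalence whenever $f$ is. The paper's route is shorter and never needs that fact. Even for your route, the naturality bookkeeping you flag as the main obstacle is not needed for the corollary. The chain $\Sigma Y\xleftarrow{\varepsilon_Y}\operatorname{hocolim}_{\mathbb V}U^Y\rightarrow\operatorname{hocolim}_{\mathbb V}U^Z\xrightarrow{\varepsilon_Z}\Sigma Z$ is already a zigzag of weak equivalences. Building its middle map only uses $\Sigma f(U^Y_\pm)\subseteq U^Z_\pm$ and $\Sigma f(U^Y_-\cap U^Y_+)\subseteq U^Z_-\cap U^Z_+$. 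Commutativity with the augmentations matters only if you want the sharper conclusion about $\Sigma f$.

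Your verification of finiteness (counting $|\mathrm{Mor}(\CC)|+2+2|\Obj(\CC)|$ morphisms) and of acyclicity via Proposition~\ref{prop:grothendieck-acyclic} matches what the paper takes for granted.
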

\begin{proof}
Consider the two-cone cover of $\Sigma X$. Its intersection is homotopy equivalent to $X$, while both members are contractible. The zigzag $X\simeq_w B\CC$, together with the unique maps to the one-point space, gives a zigzag of objectwise weak equivalences between the intersection diagram and $B\Si_{\CC}$. The conclusion follows from Theorem~\ref{thm:categorical-reconstruction}.
\end{proof}
\begin{remark} When $\CC$ is a poset, $\Sigma_{\cat}\CC$ is its non-Hausdorff suspension, obtained by adjoining two incomparable maximal elements; see \cite[Section~2.7]{BARMAK}. Thus, categorical suspension extends this construction from posets to small categories. \end{remark}
\begin{example}[A categorical model of the sphere]
\label{ex:sphere}
Let $\mathcal K$ be the Kronecker category
\[
x\overset{f}{\underset{g}{\rightrightarrows}}y,
\]
whose classifying space is homeomorphic to $S^1$. Its categorical suspension $\mathcal C_{S^2}=\Sigma_{\cat}\mathcal K$ is generated by
\[
\begin{tikzcd}
& a & \\
x \arrow[ru, "a_x"] \arrow[rr, "f", bend left] \arrow[rr, "g"', bend right] \arrow[rd, "b_x"'] & & y \arrow[lu, "a_y"'] \arrow[ld, "b_y"] \\
& b &
\end{tikzcd}
\]
subject to
\[
a_yf=a_yg=a_x,
\qquad
b_yf=b_yg=b_x.
\]
The objects $a$ and $b$ are the two terminal cone objects. By Theorem~\ref{thm:categorical-suspension},
\[
B\mathcal C_{S^2}
\simeq_w
\Sigma B\mathcal K
\cong
S^2.
\]
Thus, $\mathcal C_{S^2}$ is a four-object acyclic categorical model of $S^2$.
\end{example}
\begin{remark}
Starting with the Kronecker category and iterating the categorical suspension, we obtain an acyclic categorical model of $S^n$ with $2n$ objects. In contrast, every finite $T_0$-space modeling $S^n$ has at least $2n+2$ points \cite{BarmakMinian2007}.
\end{remark}
\subsection{A finite categorical model of the real projective plane}
\label{subsec:projective-plane-model}
Write $\mathbb{RP}^{2}=U\cup V$, where $U$ is an open neighborhood of a Möbius band, $V$ is an open disk and $U\cap V$ is an open annulus. Thus, $U\simeq S^1$, $V\simeq *$ and $U\cap V\simeq S^1$. We choose the cover and these equivalences compatibly, so that the inclusion $U\cap V\hookrightarrow U$ corresponds strictly to a degree-two map between the chosen models of the circles.
Let $P$ be the crown poset
$$
x\longrightarrow a\longleftarrow y,
\qquad
x\longrightarrow b\longleftarrow y,
$$
and let $\mathcal K$ be the Kronecker category
$$
u\overset{f}{\underset{g}{\rightrightarrows}}v.
$$
Both $BP$ and $B\mathcal K$ are homeomorphic to $S^1$. Denote the four nonidentity morphisms of $P$ by
$$
f_1\colon x\to a,\qquad
g_1\colon x\to b,\qquad
g_2\colon y\to a,\qquad
f_2\colon y\to b.
$$
Define $d\colon P\to\mathcal K$ by
$$
d(x)=d(y)=u,\qquad d(a)=d(b)=v,
$$
and
$$
d(f_1)=d(f_2)=f,\qquad d(g_1)=d(g_2)=g.
$$
The circuit of $BP$ is sent to the square of a generator of $\pi_1(B\mathcal K)$, so $Bd\colon BP\to B\mathcal K$ has degree two.
Let $\iota_-\co U\cap V\hookrightarrow U$ and $\iota_+\co U\cap V\hookrightarrow V$ be the inclusions. The map $Bd\co BP\to B\mathcal K$ is a twofold covering. Choose a homotopy equivalence $\eta_-\co U\to B\mathcal K$ by retracting $U$ onto the core circle of the Möbius band. Since $\iota_-$ has degree two relative to the core, $(\eta_-\iota_-)_*$ maps $\pi_1(U\cap V)$ onto $2\mathbb Z$, the image of $(Bd)_*$. The covering-space lifting criterion \cite[Proposition~1.33]{Hatcher2002} therefore gives a lift $\eta_0\co U\cap V\to BP$ satisfying $Bd\,\eta_0=\eta_-\iota_-$.

Both $U\cap V$ and $BP$ have the homotopy type of a circle. Since $(Bd)_*$ and $(\eta_-\iota_-)_*$ both have degree two up to sign, $\eta_0$ has degree one up to sign and is a homotopy equivalence. Together with the unique map $\eta_+\co V\to*$, the maps $\eta_0,\eta_-,\eta_+$ define an objectwise homotopy equivalence of strict diagrams: the square involving $\iota_-$ commutes by construction, and the one involving $\iota_+$ commutes because both maps have target $*$.

Let $\mathcal G_{\mathbb{RP}^{2}}$ be the Grothendieck construction of this diagram. It has seven objects and is generated by
$$
\begin{tikzcd}[column sep=4em, row sep=4em]
x
\arrow[d, "f_1"']
\arrow[rd, "g_1" description, bend right]
\arrow[rrr, "h_1", bend left]
&
y
\arrow[ld, "g_2" description, bend left]
\arrow[d, "f_2"]
\arrow[rr, "h_2"]
&&
u
\arrow[r, "f", bend left]
\arrow[r, "g"', bend right]
&
v
\\
a
\arrow[rrrru, "m_1"', bend right]
\arrow[rd, "p"']
&
b
\arrow[rrru, "m_2"]
\arrow[d, "q"]
&&&
\\
&
\bullet
&&&
\end{tikzcd}
$$
subject to
$$
m_1f_1=fh_1,\qquad
m_2g_1=gh_1,\qquad
m_1g_2=gh_2,\qquad
m_2f_2=fh_2,
$$
and
$$
pf_1=qg_1,\qquad
pg_2=qf_2.
$$
The first four relations encode the degree-two map, while the last two come from the unique functor $P\to\mathbf{1}$.
Theorem~\ref{thm:categorical-reconstruction} now gives
$$
B\mathcal G_{\mathbb{RP}^{2}}\simeq_w\mathbb{RP}^{2}.
$$

This construction gives a seven-object acyclic categorical model of $\mathbb{RP}^{2}$. For comparison, Barmak describes a finite $T_0$-space model with thirteen points \cite[Example~7.1.1]{BARMAK}, and Cianci and Ottina proved that no finite $T_0$-space model has fewer points \cite{CianciOttina2018}. Tanaka gives a smaller acyclic categorical model with only three objects \cite[Example~3.8]{Tanaka2018}. Our model is not intended to be minimal; its purpose is to exhibit how a categorical model can be assembled from an open cover and compatible local models.
\section{Categorical models from membership-pattern diagrams}
\label{sec:sancho-diagram}
We now consider the smaller diagram determined by the membership patterns realized by points of the space. For finite covers, this is the finite-space construction of Sancho de Salas \cite{SanchoDeSalas2017}. The same construction applies to point-finite covers.
Let $\Uc=\{U_i\}_{i\in I}$ be a point-finite open cover of $X$. For $x\in X$, set
\[
I_x=\{i\in I:x\in U_i\},
\qquad
U_x=\bigcap_{i\in I_x}U_i.
\]
Point-finiteness ensures that $I_x$ is finite, so $U_x$ is open. Moreover, $I_x$ is nonempty and $x\in U_x$.
Define $x\sim_{\Uc}y$ if $I_x=I_y$, equivalently if $U_x=U_y$, and write $q_x$ for the class of $x$. The \emph{membership poset} of $\Uc$ is
\[
Q_{\Uc}=X/{\sim_{\Uc}},
\]
ordered by
\[
q_x\leq q_y
\quad\Longleftrightarrow\quad
I_x\subseteq I_y
\quad\Longleftrightarrow\quad
U_x\supseteq U_y.
\]
For $q=q_x$, write $U_q=U_x$. The assignment $q\mapsto U_q$ defines the \emph{membership-pattern diagram}
\[
\Dc_{\Uc}\co Q_{\Uc}^{\op}\longrightarrow\operatorname{Op}(X).
\]
The map $q_x\mapsto I_x$ identifies $Q_{\Uc}$ with the subposet
\[
\{I_x:x\in X\}\subseteq P_{\Uc}.
\]
Under this identification,
\[
\Dc_{\Uc} = \Uc_{\cap}|_{Q_{\Uc}^{\op}}.
\]
Thus, the membership-pattern diagram is the restriction of the full intersection diagram to those families of indices which occur as the exact membership pattern of a point. When $\Uc$ is finite, $Q_{\Uc}$ is a finite poset. This construction is also related to McCord's use of point-finite, basis-like open covers to obtain weak homotopy models \cite{McCord1967}.
\subsection{Reconstruction from the membership-pattern diagram}
Since $x\in U_{q_x}$ for every $x\in X$, the sets $U_q$ cover $X$. Their inclusions induce an augmentation
\[
\varepsilon_{\Uc}\co\operatorname{hocolim}_{Q_{\Uc}^{\op}}U_q \longrightarrow X.
\]
\begin{teo}[Membership-pattern reconstruction]
\label{thm:sancho-reconstruction}
Let $\Uc$ be a point-finite open cover of $X$. Then $\varepsilon_{\Uc}$ is a weak homotopy equivalence.
\end{teo}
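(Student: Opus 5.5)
We apply Segal's open covering theorem (Theorem~\ref{thm:segal-open-covering}) to the functor $\Dc_{\Uc}\co Q_{\Uc}^{\op}\to\operatorname{Op}(X)$. The covering hypothesis was noted above: $x\in U_{q_x}$ for every $x\in X$, so the sets $U_q$ cover $X$. It remains to show that, for each $x\in X$, the full subcategory $(Q_{\Uc}^{\op})_x$ has contractible classifying space. This subcategory is spanned by those $q$ with $x\in U_q$. By the sufficient condition stated after Theorem~\ref{thm:segal-open-covering}, it is enough to show that this subcategory has an initial or terminal object. The candidate is $q_x$ itself.

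\textbf{Step 1.} We identify the objects of $(Q_{\Uc}^{\op})_x$. Let $q=q_y$. Then $x\in U_q=\bigcap_{i\in I_y}U_i$ holds exactly when $I_y\subseteq I_x$, that is, when $q_y\leq q_x$ in $Q_{\Uc}$. Thus the objects $q$ with $x\in U_q$ are precisely the elements of the down-set $\{q\in Q_{\Uc}: q\leq q_x\}$. Point-finiteness is what makes $U_q$ an open set, so that $\Dc_{\Uc}$ really lands in $\operatorname{Op}(X)$. It is also what makes $I_x$ a finite set, so that $I_x$ belongs to $P_{\Uc}$.

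\textbf{Step 2.} The down-set $\{q\leq q_x\}$ has maximum $q_x$. In the opposite category, $q_x$ is therefore an initial object of $(Q_{\Uc}^{\op})_x$, since $q\leq q_x$ gives a morphism $q_x\to q$ in $Q_{\Uc}^{\op}$. Hence $B(Q_{\Uc}^{\op})_x$ is contractible. Theorem~\ref{thm:segal-open-covering} then yields that $\varepsilon_{\Uc}$ is a weak homotopy equivalence.

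The only delicate point is making sure the augmentation in the statement is exactly the canonical map $\varepsilon_V$ of Segal's theorem for $V=\Dc_{\Uc}$. We also need $(Q_{\Uc}^{\op})_x$ to be the full subcategory that Segal's theorem uses. Both follow directly from the definitions, because $Q_{\Uc}$ is identified with the subposet $\{I_x\}$ of $P_{\Uc}$ and $\Dc_{\Uc}$ is the restriction of $\Uc_{\cap}$ to it. So no real obstacle is expected. The substance lies in the equivalence $x\in U_{q_y}\iff I_y\subseteq I_x$, which is immediate from the definition of $U_{q_y}$ as the intersection of the members containing $y$.
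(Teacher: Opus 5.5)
Your proposal is correct and follows essentially the same route as the paper: apply Segal's open covering theorem to $\Dc_{\Uc}$ and observe that $q_x$ is an initial object of $(Q_{\Uc}^{\op})_x$, because $x\in U_{q_y}$ forces $I_y\subseteq I_x$. You also record the converse implication, which identifies $(Q_{\Uc}^{\op})_x$ exactly as the down-set of $q_x$; this is correct but not needed for the argument.
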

\begin{proof}
We apply Theorem~\ref{thm:segal-open-covering} to $\Dc_{\Uc}$. Fix $x\in X$, and let $(Q_{\Uc}^{\op})_x$ be the full subcategory formed by the objects $q$ such that $x\in U_q$.

The object $q_x$ belongs to $(Q_{\Uc}^{\op})_x$. If $q=q_y$ and $x\in U_q=U_y$, then $I_y\subseteq I_x$. Hence $q_y\leq q_x$ in $Q_{\Uc}$, so there is a unique morphism $q_x\to q_y=q$ in $Q_{\Uc}^{\op}$. Therefore, $q_x$ is initial in $(Q_{\Uc}^{\op})_x$, and its classifying space is contractible. The result follows from Theorem~\ref{thm:segal-open-covering}.
\end{proof}
\subsection{Categorical membership models}
A \emph{categorical membership model} subordinate to $\Uc$ consists of a functor
\[
\Fc\co Q_{\Uc}^{\op}\longrightarrow\cat
\]
such that $\Dc_{\Uc}$ and $B\Fc$ are connected by a zigzag of objectwise weak homotopy equivalences in $\Top^{Q_{\Uc}^{\op}}$. The associated global category is
\[
\operatorname{MTh}(\Uc,\Fc)= \int_{Q_{\Uc}^{\op}}\Fc.
\]
\begin{teo}[Categorical membership reconstruction]
\label{thm:categorical-sancho}
Let $\Fc\co Q_{\Uc}^{\op}\to\cat$ be a categorical membership model. Then
\[
B\operatorname{MTh}(\Uc,\Fc)\simeq_w X.
\]
More precisely, there is a zigzag of weak homotopy equivalences \[ X \xleftarrow{\simeq_w} \operatorname{hocolim}_{Q_{\Uc}^{\op}}U_q \simeq_w \operatorname{hocolim}_{Q_{\Uc}^{\op}}B\Fc(q) \xrightarrow{\simeq_w} B\left(\int_{Q_{\Uc}^{\op}}\Fc\right). \]
Moreover, if every $\Fc(q)$ is acyclic, then $\operatorname{MTh}(\Uc,\Fc)$ is acyclic. If, in addition, $Q_{\Uc}$ and all the categories $\Fc(q)$ are finite, then $\operatorname{MTh}(\Uc,\Fc)$ is a finite acyclic category.
\end{teo}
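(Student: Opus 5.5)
The argument will mirror the proof of Theorem~\ref{thm:categorical-reconstruction}, with the membership poset $Q_{\Uc}$ replacing $P_{\Uc}$. We establish the displayed zigzag one arrow at a time, and then read off the acyclicity and finiteness statements from Proposition~\ref{prop:grothendieck-acyclic}.

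\emph{First arrow.} The augmentation $\varepsilon_{\Uc}\co\operatorname{hocolim}_{Q_{\Uc}^{\op}}U_q\to X$ is a weak homotopy equivalence. This is exactly Theorem~\ref{thm:sancho-reconstruction}, and it uses point-finiteness of $\Uc$. Recall the key point: for each $x$, the object $q_x$ is initial in $(Q_{\Uc}^{\op})_x$, so Segal's theorem applies.

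\emph{Middle equivalence.} By the definition of a categorical membership model, $\Dc_{\Uc}$ and $B\Fc$ are joined by a zigzag of objectwise weak homotopy equivalences in $\Top^{Q_{\Uc}^{\op}}$. Write it as $\Dc_{\Uc}=Z_0\to Z_1\leftarrow Z_2\to\cdots Z_n=B\Fc$, or with arrows in whichever directions occur. Applying $\operatorname{hocolim}_{Q_{\Uc}^{\op}}$ to each arrow gives a map of homotopy colimits. By Theorem~\ref{thm:hocolim-invariance}, each of these maps is a weak homotopy equivalence. The result is a zigzag of weak equivalences from $\operatorname{hocolim}U_q$ to $\operatorname{hocolim}B\Fc(q)$, and this is what the symbol $\simeq_w$ denotes.

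\emph{Last arrow.} Thomason's Theorem~\ref{thm:thomason}, applied to $\Fc\co Q_{\Uc}^{\op}\to\cat$, supplies the natural weak equivalence $\operatorname{hocolim}_{Q_{\Uc}^{\op}}B\Fc\to B\bigl(\int_{Q_{\Uc}^{\op}}\Fc\bigr)$. The second and third steps together are precisely Corollary~\ref{cor:categorical-diagram-model} with $\I=Q_{\Uc}^{\op}$ and $X=\Dc_{\Uc}$. Composing all three steps gives $B\operatorname{MTh}(\Uc,\Fc)\simeq_w X$.

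For the structural claims, $Q_{\Uc}$ is a poset, hence so is $Q_{\Uc}^{\op}$, and every poset is acyclic. If each $\Fc(q)$ is acyclic, Proposition~\ref{prop:grothendieck-acyclic} shows that $\int_{Q_{\Uc}^{\op}}\Fc$ is acyclic.

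For finiteness, a morphism $(q,x)\to(q',y)$ of the Grothendieck construction is a pair $(u,\alpha)$. Here $u$ is one of finitely many morphisms of $Q_{\Uc}^{\op}$, and $\alpha$ is a morphism of the finite category $\Fc(q')$. Hence there are finitely many morphisms in total.

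No step is a genuine obstacle, since each is a direct citation. The only point needing care is the middle step. Homotopy invariance must be applied arrow by arrow along the zigzag, and the weak equivalences must be natural, not merely objectwise, so that they induce maps of homotopy colimits. The appendix's version of Theorem~\ref{thm:hocolim-invariance} removes any need for cofibrancy assumptions on the intermediate diagrams.
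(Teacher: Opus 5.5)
Your proposal is correct and follows the paper's proof. The paper likewise uses Theorem~\ref{thm:sancho-reconstruction} for the augmentation, homotopy invariance (Theorem~\ref{thm:hocolim-invariance}) along the zigzag, Thomason's Theorem~\ref{thm:thomason}, and Proposition~\ref{prop:grothendieck-acyclic} for acyclicity, and your finiteness count spells out what the paper calls immediate (it tacitly uses that there are only finitely many objects $(q,x)$, which holds because each finite $\Fc(q)$ has finitely many objects).
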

\begin{proof}
Theorem~\ref{thm:sancho-reconstruction}, homotopy invariance and Thomason's theorem give the zigzag of weak homotopy equivalences. As before, the assertions about acyclicity and finiteness follow from Proposition~\ref{prop:grothendieck-acyclic}.
\end{proof}
Every categorical model over the full intersection poset restricts to one over the membership poset.
\begin{prop}
\label{prop:restriction-membership-model}
Let $\Fc\co P_{\Uc}^{\op}\to\cat$ be a categorical cover model subordinate to $\Uc$. Then
\[
\Fc|_{Q_{\Uc}^{\op}} \co Q_{\Uc}^{\op}\longrightarrow\cat
\]
is a categorical membership model. Consequently,
\[
B\left(\int_{Q_{\Uc}^{\op}} \Fc|_{Q_{\Uc}^{\op}} \right) \simeq_w X.
\]
\end{prop}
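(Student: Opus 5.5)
The plan is to restrict the given zigzag along the inclusion of indexing posets and then invoke Theorem~\ref{thm:categorical-sancho}. Let $j\co Q_{\Uc}^{\op}\hookrightarrow P_{\Uc}^{\op}$ be the inclusion induced by $q_x\mapsto I_x$. Since $I_x$ is finite and nonempty by point-finiteness, and $x\in U_{I_x}$, each $I_x$ lies in $P_{\Uc}$. By the identification recorded before Theorem~\ref{thm:sancho-reconstruction}, $j$ is the inclusion of a full subposet. Precomposition with $j$ gives a restriction functor
\[
j^*\co\Top^{P_{\Uc}^{\op}}\longrightarrow\Top^{Q_{\Uc}^{\op}}.
\]

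Three points need to be checked.

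\textbf{Step 1.} We have $j^*(\Uc_{\cap}|_{P_{\Uc}^{\op}})=\Dc_{\Uc}$. This is exactly the equality $\Dc_{\Uc}=\Uc_{\cap}|_{Q_{\Uc}^{\op}}$ noted earlier.

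\textbf{Step 2.} We have $j^*(B\Fc)=B(\Fc|_{Q_{\Uc}^{\op}})$. This holds because $B$ is applied objectwise and the structure maps of $B\Fc$ are $B\Fc(u)$.

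\textbf{Step 3.} The functor $j^*$ carries natural transformations to natural transformations, by whiskering with $j$. Its components are components of the original transformation at the objects $j(q)$, so objectwise weak homotopy equivalences are sent to objectwise weak homotopy equivalences. Applying $j^*$ to each diagram and each arrow of the zigzag $\Uc_{\cap}|_{P_{\Uc}^{\op}}\simeq_w B\Fc$ therefore produces a zigzag $\Dc_{\Uc}\simeq_w B(\Fc|_{Q_{\Uc}^{\op}})$ in $\Top^{Q_{\Uc}^{\op}}$. This is precisely the definition of a categorical membership model.

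The final assertion then follows directly from Theorem~\ref{thm:categorical-sancho} applied to $\Fc|_{Q_{\Uc}^{\op}}$.

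There is no genuine obstacle. The only point needing care is the implicit standing assumption that $\Uc$ is point-finite, since $Q_{\Uc}$ and $\Dc_{\Uc}$ are only defined in that case. I would state this hypothesis explicitly. Beyond that, the argument only requires confirming that the intermediate diagrams of the zigzag restrict without change, which is immediate because restriction is a functor on diagram categories.
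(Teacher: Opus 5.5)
Your proposal is correct and follows the paper's own proof: you restrict the given zigzag along the inclusion $Q_{\Uc}^{\op}\hookrightarrow P_{\Uc}^{\op}$, use $\Dc_{\Uc}=\Uc_{\cap}|_{Q_{\Uc}^{\op}}$, and then apply Theorem~\ref{thm:categorical-sancho}. Your extra checks make explicit what the paper leaves tacit: that restriction preserves objectwise weak equivalences, that each $I_x$ lies in $P_{\Uc}$, and that $\Uc$ is implicitly assumed point-finite.
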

\begin{proof}
The membership-pattern diagram is the restriction of the intersection diagram:
\[
\Dc_{\Uc}= \Uc_{\cap}|_{Q_{\Uc}^{\op}}.
\]
Restricting the zigzag of objectwise weak equivalences between $\Uc_{\cap}$ and $B\Fc$ therefore gives a zigzag between $\Dc_{\Uc}$ and $B(\Fc|_{Q_{\Uc}^{\op}})$. The conclusion follows from Theorem~\ref{thm:categorical-sancho}.
\end{proof}
\begin{remark}
Proposition~\ref{prop:restriction-membership-model} connects the two reconstruction procedures. The full intersection diagram preserves one object for every nonempty formal intersection, whereas the membership-pattern diagram retains only the intersections of the form $U_x$. Both reconstruct $X$, but the second indexing poset may be smaller.
\end{remark}
\subsection{Recovery of the Sancho de Salas model}
\begin{cor}
\label{cor:sancho-contractible}
If every $U_q$ is weakly contractible, then
\[
BQ_{\Uc}\simeq_w X.
\]
\end{cor}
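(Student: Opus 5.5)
The plan mirrors the proof of Corollary~\ref{cor:good-cover}, with the membership poset in place of the full intersection poset. Take $\Fc=\mathbf{1}\co Q_{\Uc}^{\op}\to\cat$ to be the constant functor with value the terminal category. For each $q\in Q_{\Uc}$ there is a unique map $U_q\to *=B\mathbf{1}$. These maps are automatically natural with respect to the inclusions $U_{q'}\hookrightarrow U_q$, since every square has target the one-point space. They therefore form a natural transformation $\Dc_{\Uc}\Rightarrow B\mathbf{1}$ in $\Top^{Q_{\Uc}^{\op}}$.

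By hypothesis each $U_q$ is weakly contractible, so each component of this transformation is a weak homotopy equivalence. Hence $\mathbf{1}$ is a categorical membership model subordinate to $\Uc$. The cover $\Uc$ is point-finite throughout this section, which is needed for $Q_{\Uc}$ and the sets $U_q$ to be defined at all. Theorem~\ref{thm:categorical-sancho} then gives
\[
X\simeq_w B\left(\int_{Q_{\Uc}^{\op}}\mathbf{1}\right).
\]

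It remains to identify the Grothendieck construction. As recalled in Section~\ref{sec:categories-classifying-spaces}, $\int_{\I}\mathbf{1}\cong\I$, so here $\int_{Q_{\Uc}^{\op}}\mathbf{1}\cong Q_{\Uc}^{\op}$. The canonical homeomorphism $B(Q_{\Uc}^{\op})\cong BQ_{\Uc}$ then yields $BQ_{\Uc}\simeq_w X$.

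There is no real obstacle. The only point needing care is that a weakly contractible space is nonempty, so $U_q\to *$ really is a weak equivalence; this holds here because $U_q$ contains a point $x$ with $q=q_x$. One may also remark that $Q_{\Uc}$, viewed with its Alexandrov topology, is the finite space of Sancho de Salas when $\Uc$ is finite, so McCord's theorem recovers his statement. This remark is not needed for the proof.
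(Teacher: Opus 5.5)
Your proof is correct and matches the paper's argument: you take the constant functor $\mathbf 1$ on $Q_{\Uc}^{\op}$, use the objectwise weak equivalence $U_q\to *$, apply Theorem~\ref{thm:categorical-sancho}, and conclude via $\int_{Q_{\Uc}^{\op}}\mathbf 1\cong Q_{\Uc}^{\op}$ and $B(Q_{\Uc}^{\op})\cong BQ_{\Uc}$. Your check that each $U_q$ is nonempty, since it contains a point $x$ with $q=q_x$, is a harmless detail that the paper leaves implicit.
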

\begin{proof}
Take the constant functor $\mathbf 1\co Q_{\Uc}^{\op}\to\cat$. The maps $U_q\to B\mathbf 1=*$ form an objectwise weak equivalence, while
\[
\int_{Q_{\Uc}^{\op}}\mathbf 1\cong Q_{\Uc}^{\op}.
\]
The result follows from Theorem~\ref{thm:categorical-sancho} and the homeomorphism $B(Q_{\Uc}^{\op})\cong BQ_{\Uc}$.
\end{proof}
\begin{remark}
When $\Uc$ is finite, $Q_{\Uc}$ is the finite $T_0$-space associated with the cover by Sancho de Salas. By McCord's theorem,
\[
BQ_{\Uc}\simeq_w Q_{\Uc}.
\]
Therefore, Corollary~\ref{cor:sancho-contractible} is equivalent, at the level of weak homotopy types, to the Sancho de Salas approximation $X\simeq_w Q_{\Uc}$ \cite{McCord1966,SanchoDeSalas2017}. The original result additionally identifies the quotient map $X\to Q_{\Uc}$ as a weak homotopy equivalence.
\end{remark}
There is also a componentwise version.
\begin{cor}
\label{cor:sancho-componentwise}
Suppose that every $U_q$ is a disjoint union of open weakly contractible path components. Define
\[
\Pi_{\Uc}^{\mathrm M}\co Q_{\Uc}^{\op}\longrightarrow\cat,
\qquad
\Pi_{\Uc}^{\mathrm M}(q)=\pi_0(U_q),
\]
where $\pi_0(U_q)$ is regarded as a discrete category. Then
\[
B\left( \int_{Q_{\Uc}^{\op}}\Pi_{\Uc}^{\mathrm M}\right) \simeq_w X.
\]
If $Q_{\Uc}$ is finite and every $U_q$ has finitely many path components, the Grothendieck construction is a finite poset.
\end{cor}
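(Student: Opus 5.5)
The proof will follow that of Corollary~\ref{cor:componentwise-good-cover}, with $P_{\Uc}$ replaced by the membership poset $Q_{\Uc}$, and will conclude with Theorem~\ref{thm:categorical-sancho}.

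\textbf{The diagram.} For each $q\in Q_{\Uc}$, let
\[
r_q\co U_q\longrightarrow\pi_0(U_q)
\]
send a point to its path component. Since the components of $U_q$ are open by hypothesis, $r_q$ is continuous for the discrete topology on $\pi_0(U_q)$. Note that $B$ of a discrete category is the same set with the discrete topology. If $q\leq q'$ in $Q_{\Uc}$, then $U_{q'}\subseteq U_q$, and each path component of $U_{q'}$ lies in a unique path component of $U_q$. This defines the structure function $\pi_0(U_{q'})\to\pi_0(U_q)$, so $\Pi_{\Uc}^{\mathrm M}$ is a functor on $Q_{\Uc}^{\op}$. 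The squares
\[
\pi_0(\iota)\circ r_{q'}=r_q\circ\iota
\]
commute by construction, so the maps $r_q$ form a natural transformation $\Dc_{\Uc}\Rightarrow B\Pi_{\Uc}^{\mathrm M}$.

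\textbf{Objectwise weak equivalence.} Each $r_q$ induces a bijection on $\pi_0$. On each path component, $r_q$ restricts to a map from a weakly contractible space to a point, which is a weak homotopy equivalence. Since weak homotopy groups are computed componentwise, $r_q$ is a weak homotopy equivalence. Thus $\Pi_{\Uc}^{\mathrm M}$ is a categorical membership model, and Theorem~\ref{thm:categorical-sancho} gives
\[
B\Bigl(\int_{Q_{\Uc}^{\op}}\Pi_{\Uc}^{\mathrm M}\Bigr)\simeq_w X.
\]

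\textbf{Finiteness and the poset structure.} The category $Q_{\Uc}^{\op}$ is a poset, and each $\pi_0(U_q)$ is a discrete category, hence a poset. By Proposition~\ref{prop:grothendieck-acyclic}, the Grothendieck construction is therefore a poset. Its objects are the pairs $(q,C)$ with $C\in\pi_0(U_q)$. There are finitely many such pairs when $Q_{\Uc}$ is finite and each $U_q$ has finitely many path components. Since there is at most one morphism between any two objects, the category then has finitely many morphisms, so it is a finite poset.

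There is no real obstacle. The only point needing care is that the open-component hypothesis is what makes each $r_q$ continuous. Without it, $r_q$ need not be continuous, and naturality alone would not suffice.
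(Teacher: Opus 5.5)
Your proposal is correct and follows essentially the same route as the paper: the quotient maps $U_q\to\pi_0(U_q)$ are continuous because the components are open, are weak equivalences by weak contractibility, and form a natural transformation $\Dc_{\Uc}\Rightarrow B\Pi_{\Uc}^{\mathrm M}$, so Theorem~\ref{thm:categorical-sancho} applies, with the poset structure coming from Proposition~\ref{prop:grothendieck-acyclic}. Your explicit argument that finitely many objects with at most one morphism between each pair gives finitely many morphisms is a detail the paper leaves implicit.
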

\begin{proof}
The quotient maps $U_q\to\pi_0(U_q)$ are continuous because the path components are open, and they are weak homotopy equivalences because those components are weakly contractible. They form a natural transformation $\Dc_{\Uc}\Rightarrow B\Pi_{\Uc}^{\mathrm M}$, so the result follows from Theorem~\ref{thm:categorical-sancho}. Since the indexing category and the values of $\Pi_{\Uc}^{\mathrm M}$ are posets, their Grothendieck construction is a poset.
\end{proof}
This is the membership-pattern analogue of the multinerve construction: it retains the path components of the realized intersections $U_q$, rather than those of every formal intersection.
\subsection{Examples}
\begin{example}
\label{ex:sancho-bouquet}
Let $m\geq2$ and let $X=\bigvee_{r=1}^{m}S^1_r$. Choose a contractible open neighborhood $W$ of the common vertex. For each $r$, let $U_r$ be an open neighborhood of $S^1_r\cup W$ which deformation retracts onto $S^1_r$ and is chosen so that every intersection involving at least two distinct members of the cover is equal to $W$.
The realized membership patterns are
\[
\{1\},\ldots,\{m\},
\qquad
\{1,\ldots,m\}.
\]
Hence the membership poset $Q_{\Uc}$ has objects $q_1,\ldots,q_m,q_*$ and relations $q_r<q_*$.
For each $r$, let $\mathcal K_r$ be a copy of the Kronecker category, and choose an object $z_r\in\mathcal K_r$. Define $\Fc\co Q_{\Uc}^{\op}\to\cat$ by
\[
\Fc(q_*)=\mathbf 1,
\qquad
\Fc(q_r)=\mathcal K_r,
\]
and send the morphism $q_*\to q_r$ in $Q_{\Uc}^{\op}$ to the functor $\mathbf 1\to\mathcal K_r$ selecting $z_r$.
The local equivalences can be chosen compatibly. Indeed, choose a weak homotopy equivalence
\[
\eta_r\co U_r\longrightarrow B\mathcal K_r
\]
which collapses $W$ to the vertex $z_r$, and let $\eta_*\co W\to B\mathbf 1$ be the unique map. Then $\eta_r|_W=B\Fc(q_*\to q_r)\eta_*$, so these maps define an objectwise weak homotopy equivalence of strict diagrams
\[
\Dc_{\Uc}\Longrightarrow B\Fc.
\]
Theorem~\ref{thm:categorical-sancho} therefore yields
\[
B\left(\int_{Q_{\Uc}^{\op}}\Fc\right) \simeq_w \bigvee_{r=1}^{m}S^1_r.
\]
The full intersection poset $P_{\Uc}$ has $2^m-1$ objects, whereas $Q_{\Uc}$ has only $m+1$, since it discards the intersection patterns which are not realized as exact membership sets. This compression concerns the indexing poset. The Grothendieck construction above has $2m+1$ objects, while the bouquet also admits the more economical model obtained by gluing $m$ Kronecker categories along a common object, which has $m+1$ objects.
\end{example}
\begin{example}
\label{ex:membership-mobius-bands}
Let $m\geq3$. Let $P$ be the crown poset and let $\mathcal K_r$ be a copy of the Kronecker category for each $1\leq r\leq m$. As in Section~\ref{subsec:projective-plane-model}, choose functors $d_r\co P\to\mathcal K_r$ such that $Bd_r\co BP\to B\mathcal K_r$ is a twofold covering of circles.

Let $M_r$ be the mapping cylinder of $Bd_r$, with its copy of $BP$ as boundary. Thus, $M_r$ is a Möbius band \cite[Example~1.35]{Hatcher2002}. Form $X_m$ by identifying the boundaries of $M_1,\ldots,M_m$ with one another. When $m=2$, this construction gives the Klein bottle; we assume $m\geq3$ because, for $m=2$, both indexing posets considered below have three objects and there is no compression.

Choose a small open collar $W$ of the common boundary in $X_m$, and put $U_r=W\cup\operatorname{int}(M_r)$. Then $\Uc=\{U_1,\ldots,U_m\}$ is an open cover, and every intersection involving at least two distinct members is $W$. Moreover, $W\simeq BP$ and $U_r\simeq B\mathcal K_r$.

The only exact membership patterns are $\{r\}$, for $1\leq r\leq m$, and $\{1,\ldots,m\}$. Hence $Q_{\Uc}$ has objects $q_1,\ldots,q_m,q_*$, with $q_r<q_*$. Define $\Fc\co Q_{\Uc}^{\op}\to\cat$ by $\Fc(q_*)=P$, $\Fc(q_r)=\mathcal K_r$, and $\Fc(q_*\to q_r)=d_r$.

These models are compatible as a strict diagram. Projection along the collar yields a homotopy equivalence $\eta_*\co W\to BP$. For each $r$, define $\eta_r\co U_r\to B\mathcal K_r$ by the mapping-cylinder retraction on $M_r$ and, on $W\cap M_s$ for $s\ne r$, by $Bd_r\circ\eta_*$. The definitions agree along the common boundary, and $\eta_r$ is a homotopy equivalence. In particular, $\eta_r|_W=Bd_r\circ\eta_*$, so the maps $\eta_*$ and $\eta_r$ define an objectwise homotopy equivalence of strict diagrams $\Dc_{\Uc}\Rightarrow B\Fc$. Theorem~\ref{thm:categorical-sancho} therefore gives
\[
B\left(\int_{Q_{\Uc}^{\op}}\Fc\right)\simeq_w X_m.
\]
The resulting category is finite and acyclic, with $2m+4$ objects.
Every nonempty formal intersection contains $W$, so $P_{\Uc}$ has $2^m-1$ objects, whereas $Q_{\Uc}$ has only $m+1$. Unlike a good-cover model, the membership poset alone does not capture $X_m$: the functors $d_r$ retain the degree-two boundary maps. In particular, van Kampen's theorem gives
\[
\pi_1(X_m)\cong \langle a_1,\ldots,a_m \mid a_1^2=a_2^2=\cdots=a_m^2\rangle.
\]
\end{example}
\appendix
\section{Homotopy invariance of the simplicial replacement}
\label{app:homotopy-invariance}
We justify the homotopy invariance property used in Theorem~\ref{thm:hocolim-invariance}. The usual elementary statement for the Bousfield--Kan construction assumes that the objects of the diagrams are cofibrant; see \cite[Proposition~4.7]{DuggerHocolimPrimer}. Since the diagrams considered in this article contain open subsets which need not be cofibrant, we use the corresponding result for simplicial spaces with free degeneracies. We follow \cite{DuggerIsaksen2004}.
Recall that a simplicial space $Z$ has \emph{free degeneracies} if there are subspaces $N_nZ\subseteq Z_n$ such that
$$
Z_n \cong \coprod_{\theta\colon[n]\twoheadrightarrow[k]}N_kZ,
$$
where the coproduct runs over the order-preserving surjections and the summand indexed by $\theta$ is mapped into $Z_n$ by the corresponding degeneracy operator.
\begin{lemma}
\label{lem:srep-free-degeneracies}
For every diagram $X\co\I\to\Top$, its simplicial replacement $\operatorname{srep}(X)$ has free degeneracies.
\end{lemma}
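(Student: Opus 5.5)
The plan is to read the nondegenerate part off the decomposition of $\operatorname{srep}(X)_n$ into summands indexed by strings. For each $n$, let $N_n\operatorname{srep}(X)\subseteq\operatorname{srep}(X)_n$ be the union of the summands $X(i_0)$ indexed by strings
\[
i_0\xrightarrow{u_1}i_1\xrightarrow{u_2}\cdots\xrightarrow{u_n}i_n
\]
in which no $u_k$ is an identity, that is, the nondegenerate $n$-simplices of $\N\I$. Since $\operatorname{srep}(X)_n$ is a topological coproduct of copies of the spaces $X(i_0)$, this union is a subspace and is itself a coproduct of such summands.

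Next I would use the Eilenberg--Zilber lemma for the simplicial set $\N\I$. Every $n$-simplex $\sigma$ of $\N\I$ can be written uniquely as $\sigma=\theta^*\tau$, with $\theta\co[n]\twoheadrightarrow[k]$ an order-preserving surjection and $\tau$ a nondegenerate $k$-simplex. Concretely, $\tau$ is obtained by deleting the identity arrows from the string, and $\theta$ records which consecutive objects are repeated. This is elementary for nerves: the degeneracy $s_j$ inserts an identity at position $j$, so the strings with identities are exactly the images of degeneracy operators, and the decomposition is recovered by collapsing runs of identities.

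The key compatibility is that the degeneracy operator $\theta^*$ on $\operatorname{srep}(X)$ sends the summand $X(j_0)$ indexed by $\tau$ identically onto the summand indexed by $\theta^*\tau$. Its source object is again $j_0$, since $\theta(0)=0$, so $\theta^*\tau$ begins at $j_0$. Degeneracies of the simplicial replacement only insert identities and act as the identity on the fibre; no structure map of $X$ is applied. Because the set of strings in $\operatorname{srep}(X)_n$ is the disjoint union over $\theta$ and nondegenerate $\tau$ of the strings $\theta^*\tau$, we obtain a homeomorphism
\[
\coprod_{\theta\colon[n]\twoheadrightarrow[k]}N_k\operatorname{srep}(X)\xrightarrow{\ \cong\ }\operatorname{srep}(X)_n .
\]
It is a bijection on summands and the identity on each summand. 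This is a homeomorphism because both sides are coproducts.

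The only delicate point is the unique-decomposition statement together with the observation that degeneracies involve no structure maps. I would verify the latter by checking it on the generators $s_j$ and then extending to composites $\theta^*$. Everything else is bookkeeping with coproducts, and no point-set hypotheses on the $X(i)$ are needed.
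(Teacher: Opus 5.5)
Your proposal is correct and follows the same route as the paper. You take the nondegenerate part to be the summands indexed by identity-free strings, decompose each string of $\N\I$ uniquely as a degeneracy of an identity-free string, and note that degeneracies of $\operatorname{srep}(X)$ only insert identities and act as the identity on the fibres; you spell out in more detail (Eilenberg--Zilber, $\theta(0)=0$, the coproduct homeomorphism) what the paper states in a few lines.
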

\begin{proof}
An $n$-simplex of the indexing nerve $\N\I$ is a string $i_0\to\cdots\to i_n$. Such a string is degenerate precisely when one of its morphisms is an identity. Every string is uniquely obtained from a string containing no identity morphisms by inserting identities. Since the degeneracy operators of $\operatorname{srep}(X)$ insert identity morphisms in the indexing strings, the required decomposition follows.
\end{proof}
\begin{prop}
\label{prop:hocolim-invariance-appendix}
Let $X,Y\co\I\to\Top$ be diagrams and let $\alpha\co X\Rightarrow Y$ be an objectwise weak homotopy equivalence. Then the induced map
$$
\left|\operatorname{srep}(X)\right|\longrightarrow \left|\operatorname{srep}(Y)\right|
$$
is a weak homotopy equivalence.
\end{prop}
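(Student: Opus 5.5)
The plan is to deduce the statement from the realization theorem for simplicial spaces with free degeneracies in \cite{DuggerIsaksen2004}. That theorem says: if $f\co Z\to Z'$ is a map of simplicial spaces, both with free degeneracies, and each $f_n\co Z_n\to Z'_n$ is a weak homotopy equivalence, then $|f|\co|Z|\to|Z'|$ is a weak homotopy equivalence. By Lemma~\ref{lem:srep-free-degeneracies}, both $\operatorname{srep}(X)$ and $\operatorname{srep}(Y)$ have free degeneracies. So it will suffice to check two things: that $\alpha$ induces a simplicial map $\operatorname{srep}(\alpha)$, and that this map is a levelwise weak equivalence.

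\textbf{Step 1: the induced simplicial map.} Define $\operatorname{srep}(\alpha)_n$ on the summand indexed by a string $i_0\to\cdots\to i_n$ as $\alpha_{i_0}\co X(i_0)\to Y(i_0)$, landing in the summand indexed by the same string. Compatibility with the inner faces and the degeneracies is immediate, since these only change the indexing string. The only face that acts on spaces is $d_0$, which applies the structure map $X(i_0\to i_1)$. Compatibility with $d_0$ is exactly the naturality square of $\alpha$. I would also check that $\operatorname{srep}(\alpha)$ respects the free-degeneracy decompositions: it maps the nondegenerate summand $N_k$ of $X$ into that of $Y$, because nondegeneracy is a property of the indexing string alone.

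\textbf{Step 2: levelwise weak equivalence.} A coproduct of weak homotopy equivalences is a weak homotopy equivalence. The reason is that $\pi_0$ of a coproduct is the disjoint union of the $\pi_0$'s, and each homotopy group at a basepoint is computed inside the summand containing that basepoint, since spheres and their homotopies are connected. Hence each $\operatorname{srep}(\alpha)_n=\coprod\alpha_{i_0}$ is a weak equivalence. The same argument shows that the restriction of $\operatorname{srep}(\alpha)_n$ to the nondegenerate parts is a weak equivalence, if the cited theorem is stated in that form.

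\textbf{Step 3 and the main obstacle.} Applying the cited theorem (\cite[Corollary~A.6]{DuggerIsaksen2004}, or the relevant appendix statement) gives the conclusion. The delicate point is making sure the hypotheses match exactly: the result must require no cofibrancy of the spaces $Z_n$, only free degeneracies, and it must allow arbitrary spaces rather than compactly generated ones.

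If the citation does not fit exactly, I would fall back on a direct argument. Filter $|Z|$ by skeleta. Free degeneracies make each skeleton a pushout of $\operatorname{sk}_{n-1}|Z|$ along $N_nZ\times\partial\Delta^n\to N_nZ\times\Delta^n$. Such a pushout is excisive for open neighbourhoods: use a collar of $\partial\Delta^n$ and the fact that $N_nZ\times(\Delta^n\setminus\{\text{barycenter}\})$ deformation retracts onto $N_nZ\times\partial\Delta^n$. Then the gluing lemma for weak equivalences along open covers, Theorem~\ref{thm:segal-open-covering} with a three-object indexing category, gives induction on $n$. Finally, compactness of spheres lets us pass to the colimit over skeleta. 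This compactness step needs the inclusions of skeleta to be closed $T_1$-type inclusions, which I expect to be the most technical verification.
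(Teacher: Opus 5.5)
Your proposal is correct and is essentially the paper's own proof. Lemma~\ref{lem:srep-free-degeneracies} gives free degeneracies, the induced map is $\alpha_{i_0}$ on each summand and so is a levelwise weak equivalence, and the Dugger--Isaksen realization theorem finishes the argument; your checks that $\operatorname{srep}(\alpha)$ is simplicial, respects the decompositions, and that coproducts of weak equivalences are weak equivalences spell out what the paper leaves implicit.

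The skeletal fallback is not needed. If you ever use it, run the compactness step through open neighbourhoods of the skeleta (remove the barycentres of the higher-dimensional cells) rather than a $T_1$ condition, since the $X(i)$ are arbitrary spaces and the skeleton inclusions need not be $T_1$ inclusions.
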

\begin{proof}
In simplicial degree $n$, the induced map is
$$
\coprod_{i_0\to\cdots\to i_n}X(i_0) \longrightarrow \coprod_{i_0\to\cdots\to i_n}Y(i_0),
$$
and its restriction to the summand indexed by $i_0\to\cdots\to i_n$ is $\alpha_{i_0}$. It is therefore a weak homotopy equivalence in every simplicial degree.
By Lemma~\ref{lem:srep-free-degeneracies}, both simplicial replacements have free degeneracies. The realization theorem for simplicial spaces with free degeneracies now implies that a levelwise weak homotopy equivalence between them induces a weak homotopy equivalence on realizations; see \cite[Theorem~1.2 and Appendix~A]{DuggerIsaksen2004}. Hence
$$
\left|\operatorname{srep}(X)\right| \longrightarrow \left|\operatorname{srep}(Y)\right|
$$
is a weak homotopy equivalence.
\end{proof}
\bibliographystyle{plain}
\bibliography{biblio}
\end{document}